\newcommand\X{{\bZ_0^d}}
\newcommand\Xz{{\bZ^d}}
\newcommand\Xk{{\bZ_k^d}}
\newcommand\act{\hat{+}}
\newcommand\kact{\hat{+}_k}
\newcommand\No{\bN\cup\set{0}}
\begin{document}
\title{Random walks on primitive lattice points}
\author{Oliver Sargent}
\begin{abstract}
    We define a random walk on the set of primitive points of $\bZ^d$. 
    We prove that for walks generated by measures satisfying mild conditions these walks are recurrent in a strong sense.
    That is, we show that the associated Markov chains are positive recurrent and there exists a unique stationary measure for the random walk.
\end{abstract}
\address{Faculty of Mathematics and Computer Science\\
    The Weizmann Institute of Science \\
    Rehovot \\
Israel }
\email{o.g.sargent@gmail.com}
\maketitle

\section{Introduction}
Random walks on lattices have been extensively studied by probabilists.
They provide fascinating examples and useful models for many natural processes.
Random walks are naturally part of the larger class of Markov chains.
One of the most basic properties of Markov chains is recurrence.
It is well known that in one or two dimensions symmetric walks are recurrent in the sense that they return to their starting points almost surely. 
In higher dimensions walks are transient which means that they are not recurrent in the above sense. For the details, see~\cite{MR0388547} or~\cite{MR2677157} for instance.

In this note we will define
random walks on primitive lattice points, or more generally, lattice points which lack specified factors 
common to all of their co-ordinates.
We will do this by considering the usual random walk on the lattice but at each stage we will divide by the `gcd of the vector' so that the walker always ends up at a primitive point.
It seems quite surprising that the author could not find a good reference for this simple and natural set up.
The properties of the primitive random walks we consider are very different from the usual random walks.
We will show that they exhibit strong recurrence properties. 
In the language of Markov chains we will show that they are positive recurrent.
In particular, we will show that there is an invariant stationary distribution. 
While at first this fact might seem quite surprising, in hindsight it can be seen to follow from a simple equidistribution property of random walks on the discrete torus.

\subsection{Main results}\label{ssec:mainresults}
To state our main results we must formally describe our setting.
Here and throughout the paper, by a lattice we mean a discrete subgroup of Euclidean space. 
Since any such object is isomorphic to $\bZ^d$ for some $d\in\bN\cup\set{0}$ we choose to restrict our attention to the lattice $\bZ^d$. 
Let $\set{e_1,\dots,e_d}$ be the standard basis of $\bR^d$ so we may write $\bZ^d=\Sp{e_1,\dots,e_d}_\bZ$. 
For $z\in\Xz$ and $1\le i\le d$ we define the 
co-ordinates $z_i\defi z\bcdot e_i$.
We recall that $z\in\bZ^d$ is said to be \emph{primitive} if $\gcd(z_1,\dots,z_d)=1$. Let $\bZ_0^d$ denote the set of primitive points of $\bZ^d$.
We will use the convention that $\gcd(0,\dots,0)=1$ and so $(0,\dots,0)\in\X$.

Consider the map $\bZ^d\times \bZ_0^d \to \bZ_0^d$ given by 
\begin{equation}\label{eq:defofact}(a,z)\mapsto a\hat{+}z\defi\frac{a+z}{\gcd(a+z)}.\end{equation} 
We remark that this map is not an action of $\Xz$ on $\X$ since there exists $a_1,a_2\in\bZ^d$ and 
$z\in\X$ such that $(a_1+a_2)\act z \neq a_1\act a_2\act z$.

To describe our main results as quickly as possible we use the language of Markov chains. 
We refer the reader to \cite[Chapter VIII]{MR737192} for the basic facts we will use.
For a probability measure $\mu$ on $\Xz$ 
we consider the random walk on $\Xz$ where the walker chooses each step with probability given by $\mu$ and then moves to the position given by the map in~\eqref{eq:defofact}.
Suppose that the walker starts at $z\in\X$, then we let $\set{\cX_{i,z}}_{i=0}^\infty$ be the sequence of random variables which give the position of the walker after $i=0,1,2,\dots$ steps.
We note that the sequence of random variables $\set{\cX_{i,z}}_{i=0}^\infty$ forms a Markov chain with transition probabilities given by 
%
%
\begin{equation*}\mb P_{\mu}[\cX_{i+1,z}=x\act y|\cX_{i,z}=y]\defi\mu(x)\qfa\ y\in\X\ \textrm{and}\ i\in\No\ :\ \mb P_\mu[\cX_{i,z}=y]>0.\end{equation*}
We use the notation $\on M_z^0(\mu)$ to denote this Markov chain.

A Markov chain is said to \emph{irreducible} (or sometimes indecomposable) if there is a positive probability to reach a specified state from any given starting state. 
An irreducible Markov chain is said to be \emph{positive recurrent} if for all states, the expected return time to that state is positive.

For a probability measure $\mu$ on $\Xz$ we will use the following two standing assumptions which we record here for convenience:
\begin{enumerate}[\enskip (A)]
    \item\label{assump:0} Finite first moment, in the sense that $\sum_{z\in\bZ^d}\norm{z}\mu(z)<\infty.$
\item\label{assump:1} Support which generates $\bZ^d$, in the sense that for every $z\in\Xz$ there exists $n\in\bN$ such that $\mu^{\ast n}(z)>0$.
\end{enumerate}
Our first main result is the following:
\begin{thm}\label{thm:MC1}
    Let $\mu$ be a probability measure on $\Xz$ satisfying~\eqref{assump:0} and~\eqref{assump:1}, then for all $z\in\X$ the Markov chain $\on M_z^0(\mu)$ is irreducible and positive recurrent.
\end{thm}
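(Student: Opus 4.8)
The plan is to treat irreducibility and positive recurrence separately. Positive recurrence will come from a Foster--Lyapunov drift estimate for the Euclidean norm, and the entire difficulty is to make that drift \emph{uniform} in the starting state; this is exactly where~\eqref{assump:1} is used, via an irreducibility (``equidistribution'') statement for the walk induced on the finite torus $(\bZ/p\bZ)^d$.

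\textbf{Irreducibility.} Hypothesis~\eqref{assump:1} says precisely that $\mathrm{supp}(\mu)$ generates $\bZ^d$ as a semigroup, i.e.\ every $v\in\Xz$ is a finite sum of elements of $\mathrm{supp}(\mu)$; in particular there are ``null words'' $0=c_1+\dots+c_L$ with $c_i\in\mathrm{supp}(\mu)$. I would use this to show by an elementary connectivity argument that any two points of $\X$ communicate. The useful device is that $\gcd(Mx)=M$ for primitive $x$, so if the walker sits at a point $w$ with $Mx-w\in\mathrm{supp}(\mu)$ for some $M\ge1$ then one step sends it to $x$; routing through such large multiples of the target, and inserting null words to keep intermediate positions primitive, lets one connect any $z$ to any $x$. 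I expect this step to be routine but somewhat technical.

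\textbf{Positive recurrence.} Set $V(z)=\norm z$ and fix a prime $p$ (say $p=2$). Writing one run of the chain from $z$ as $\cX_i=(a_i+\cX_{i-1})/g_i$ with $g_i=\gcd(a_i+\cX_{i-1})$ and $G_n\defi g_1\cdots g_n$ (with $G_0\defi1$), an elementary unwinding gives $G_n\cX_n=z+\sum_{i=1}^n G_{i-1}a_i$, hence, since each $g_j\ge1$,
\[\norm{\cX_n}\ \le\ \frac{\norm z}{G_n}+\sum_{i=1}^n\norm{a_i}.\]
Taking expectations and using~\eqref{assump:0} (so $\mb E_\mu\norm a<\infty$), it suffices to exhibit $n$ and $c>0$ with $\mb P[\,G_n\ge2\mid\cX_0=z\,]\ge c$ for every $z\in\X\setminus\set0$: then $\mb E[\norm{\cX_n}\mid\cX_0=z]\le(1-\tfrac c2)\norm z+n\,\mb E_\mu\norm a$, which is $\le\norm z-1$ once $\norm z$ is large. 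That is Foster's criterion for the $n$-step chain with exceptional set $\set{z:\norm z\le R}$ (which harmlessly absorbs the state $0$), and it passes to the chain itself by the standard reduction replacing $V$ by $\sum_{j=0}^{n-1}P^jV$, a function which is finite-valued because~\eqref{assump:0} gives $\mb E[\norm{\cX_j}\mid\cX_0=x]\le\norm x+j\,\mb E_\mu\norm a<\infty$.

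\textbf{The crux: the uniform bound on $\mb P[G_n\ge2]$.} Observe that $G_n\ge2$ as soon as some $g_i$ is divisible by $p$, and $p\mid g_i\iff a_i\equiv-\cX_{i-1}\pmod p$. Moreover, \emph{before} the first index $T$ at which some $g_i$ is divisible by $p$, every $g_j$ is prime to $p$, so reducing the identity $G_j\cX_j=z+\sum_{k\le j}G_{k-1}a_k$ modulo $p$ yields $\cX_j\equiv z+a_1+\dots+a_j\pmod p$ for all $j<T$. Consequently $T=\min\{i\ge1:S_i=0\}$, where $S_i\defi z+a_1+\dots+a_i\bmod p$ is the random walk on the finite group $(\bZ/p\bZ)^d$ with increment law the pushforward $\mu_p$ of $\mu$, started at $\bar z\neq0$ (nonzero because $z\in\X\setminus\set0$ is primitive). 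By~\eqref{assump:1}, $\mathrm{supp}(\mu_p)$ generates $(\bZ/p\bZ)^d$, so $S$ is an irreducible walk on a finite set; hence there are $n_0$ and $c_0>0$ such that $S$ started at any nonzero point hits $0$ within $n_0$ steps with probability at least $c_0$. Taking $n=n_0$ gives $\mb P[\,G_{n_0}\ge2\mid\cX_0=z\,]\ge c_0$ uniformly over $z\in\X\setminus\set0$, which finishes the argument. The genuinely clever point is recognizing this mod-$p$ reduction (divisibility of the running $\gcd$ by $p$ $\leftrightarrow$ a finite torus walk hitting $0$); once the bookkeeping identity for $\cX_n$ is in place, everything else is soft.
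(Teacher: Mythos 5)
Your route to positive recurrence is genuinely different from the paper's and, where it works, arguably cleaner. The paper never invokes Foster's criterion directly: it proves a drift inequality $\int\norm x\dv{\om^k_{n,z}}x\le c\norm z+M$ (Lemma~\ref{lem:norm is contracted}) by a \emph{quantitative} argument — a Chernoff bound showing that the plain walk on the discrete torus visits $0$ a positive \emph{proportion} of the time, uniformly over starting points — then iterates this to get tightness of the Ces\`aro averages $\frac1n\sum_i\om^k_{i,z}$ (Definition~\ref{def:rec}, Proposition~\ref{prop:redux}), and finally deduces positive recurrence from the existence of a stationary probability measure via the classical theorem for irreducible countable chains. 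You instead need only \emph{one} division with probability bounded below uniformly in $z$, which already gives a contraction factor $1-c/2$ in the Foster inequality; this sidesteps the delicate bookkeeping needed to convert ``many visits of the plain walk to $0\bmod k$'' into ``many divisions of the modified walk''. The core idea — reduce to an irreducible walk on the finite torus $(\bZ/p\bZ)^d$ using~\eqref{assump:1} — is the same in both arguments. Your irreducibility step is only a sketch; the paper gives a complete proof by first reducing, via an absolute-continuity lemma, to the nearest-neighbour measure $\frac1{2d}\sum\delta_{\pm e_i}$ and then running an explicit connectivity argument, which is tidier than routing through multiples $Mx$ of the target.

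There is one incorrect step in your crux. You claim that for $j<T$ (with $T$ the first time $p\mid g_j$) one has $\cX_j\equiv z+a_1+\dots+a_j\pmod p$ because the $g_i$ with $i\le j$ are units mod $p$. This is false: dividing by a unit mod $p$ changes the residue class. For instance with $p=3$, $z=(1,1)$, $a_1=(1,1)$ one gets $\cX_1=(1,1)$ while $z+a_1=(2,2)$, so the congruence fails already at the first step whenever some $g_i>1$ is coprime to $p$; your identity only gives $G_j\cX_j\equiv z+\sum_{i\le j}G_{i-1}a_i$, and the factors $G_{i-1}$ do not disappear. The fix is easy and keeps your strategy intact: let $T'$ be the first time $g_i>1$ (a division by \emph{anything}). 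For $j<T'$ the modified walk \emph{equals} the plain walk, so if the plain walk satisfies $z+a_1+\dots+a_i\equiv0\pmod p$ for some $i\le n_0$, then either some division already occurred before time $i$ (whence $G_{n_0}\ge2$) or none did, in which case $a_i+\cX_{i-1}=z+a_1+\dots+a_i$ has gcd divisible by $p$ and a division occurs at time $i$. Either way $\set{G_{n_0}\ge2}$ contains the event that the plain torus walk started at $\bar z\neq0$ hits $0$ by time $n_0$, and the uniform lower bound $c_0$ from finiteness of $(\bZ/p\bZ)^d$ applies exactly as you intended. With that repair, and a fleshed-out irreducibility argument, your proof is complete.
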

The assumption that $\mu$ satisfies~\eqref{assump:0} 
is necessary to ensure that the random walk does not spread out too fast.
Assumption~\eqref{assump:1} is a nondegeneracy assumption which ensures that
the Markov chain is irreducible. 

In order to prove Theorem~\ref{thm:MC1} 
we will consider a family of inherently less recurrent Markov chains and show that they are all 
positive recurrent. 
This family of Markov chains can be thought of as random walks on the set of lattice points \emph{coprime} to $k$, for some $k\in\bN$.
We say that a point $z\in\Xz$ is coprime to $k\in\bN$ if $k\nmid\gcd(z)$.
We denote the set of points in $\Xz$ which are coprime to $k$ by $\Xk$. 
Consider the map $\Xz\times \Xk\to\Xk$ given by 
\begin{equation}\label{eq:defofkact}(z,x)\mapsto z\hat{+}_kx\defi\frac{z+x}{k^p},\quad\textrm{where}\ p\defi\max\set{n\in\bN:k^n\mid (z+x)}.\end{equation}
We proceed in a similar manner to how we constructed the Markov chain $\on M_z^0$.
For a probability measure $\mu$ on $\Xz$ we let $\on M_z^k(\mu)$ denote the Markov chain corresponding to the walk obtained by starting at $z\in\Xk$ and 
iterating the map 
in~\eqref{eq:defofkact} with steps chosen according to the measure $\mu$.
That is, $\on M_z^k(\mu)\defi\set{\cX_{i,z}}_{i=0}^\infty$ a sequence of random variables, corresponding to the random walk starting at $z$ with transition probabilities given by 
\begin{equation*}\mb P_{\mu}[\cX_{i+1,z}=x\kact y|\cX_{i,z}=y]\defi\mu(x)\qfa\ y\in\Xk\ \textrm{and}\ i\in\No\ :\ \mb P_\mu[\cX_{i,z}=y]>0.\end{equation*}
Since there is less likely hood of division occurring in the latter set up, 
one should expect that the Markov chains $\on M_z^k$ are less recurrent than $\on M_z^0$ for any $k\in\bN$ with $k\geq 2$. 
We will see that this is indeed the case and we 
we will prove the following result.
\begin{thm}\label{thm:MC2}
    Let $\mu$ be a probability measure on $\Xz$ satisfying~\eqref{assump:0} and~\eqref{assump:1}, then for all $z\in\Xk$ and $k\in\bN$ with $k\geq 2$ the Markov chain $\on M_z^k(\mu)$ is irreducible and positive recurrent.
\end{thm}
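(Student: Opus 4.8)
The plan is to mimic the strategy that underpins Theorem~\ref{thm:MC1}, namely to establish irreducibility and positive recurrence by comparison with the random walk on the discrete torus $(\bZ/k^n\bZ)^d$. First I would settle irreducibility: given $x,y\in\Xk$, I need $n$ and a sequence of steps $a_1,\dots,a_n$ with positive $\mu^{\ast n}$-measure so that iterating $\kact$ carries $x$ to $y$. The key observation is that if $a + x$ lands in the set $\{v : k\nmid v_i \text{ for some } i\}$ — equivalently $p=0$ in~\eqref{eq:defofkact} — then $a\kact x = a+x$ is an honest translation. Using~\eqref{assump:1} I can find steps realizing the translation $y - x$ as a sum $a_1+\dots+a_n$, and by inserting an extra preliminary step (again available by~\eqref{assump:1}) I can arrange that all the partial sums avoid the "divisible by $k$ in every coordinate" locus, so that no division ever occurs along the path and the net effect is the translation to $y$. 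Hence the chain is irreducible.

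For positive recurrence, the heart of the matter is to exhibit an invariant probability measure; by the standard theory (\cite[Chapter VIII]{MR737192}) an irreducible chain admitting a stationary probability measure is positive recurrent. I would construct the stationary measure by a projective-limit / renormalization argument. Consider the reduction map $\pi_n\colon\Xk\to(\bZ/k^n\bZ)^d$ and note that, away from division, $\kact$ intertwines with ordinary $\mu$-convolution on the finite group $(\bZ/k^n\bZ)^d$; the only discrepancy is the (rare, because $k\geq 2$ forces a codimension-$d$ coincidence) event that $k\mid (a+x)_i$ for all $i$. By~\eqref{assump:1} the push-forward walk on $(\bZ/k^n\bZ)^d$ is irreducible, hence has the uniform distribution as its unique stationary measure, and one gets equidistribution of $\cX_{i,z}$ modulo $k^n$ as $i\to\infty$. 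The finite first moment~\eqref{assump:0} is what prevents mass from escaping to infinity: it gives a Lyapunov/Foster–Lyapunov drift estimate showing $\norm{\cX_{i,z}}$ does not grow, so the family of occupation measures is tight. A weak-$\ast$ limit point is then a stationary probability measure for $\on M_z^k(\mu)$, and uniqueness follows from irreducibility.

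Concretely, I would organize the argument as: (i) prove irreducibility as above; (ii) prove a drift inequality of the form $\mb E\big[\norm{\cX_{i+1,z}}\,\big|\,\cX_{i,z}=y\big]\leq \norm{y} + C$ for $\norm y$ in a bounded set and $\leq \norm y - \varepsilon$ outside it — here the gain comes from the positive-probability event that a division by $k$ occurs, which contracts $\norm{\cdot}$ by a factor $1/k$, balanced against the bounded expected increment guaranteed by~\eqref{assump:0}; (iii) conclude tightness of the Cesàro occupation measures and extract a stationary probability measure as a weak limit; (iv) invoke irreducibility to upgrade existence of a stationary probability measure to positive recurrence and uniqueness.

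The main obstacle I anticipate is step (ii): unlike the primitive case $k=0$ where division happens frequently, for fixed $k\geq 2$ the division event $\{k\mid(a+y)_i\ \forall i\}$ is genuinely rare when $y$ is large (the walk has "escaped" the region where all coordinates are comparable to $k$), so a naive drift estimate gives no contraction at infinity. The resolution should be that the points of $\Xk$ with large norm are exactly those already far from the divisibility locus, so $\norm{\cX_{i,z}}$ behaves like an ordinary $\mu$-walk there — and an ordinary mean-zero or even nonzero finite-first-moment walk on $\bZ^d$ reduced modulo $k^n$ still equidistributes, so the division events recur with positive frequency in the sense needed; making this frequency-of-return argument quantitative enough to produce a genuine Foster–Lyapunov function (perhaps using $\log\norm{\cdot}$ or a truncated variant rather than $\norm{\cdot}$ itself) is where the real work lies. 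I would expect to need the full strength of the equidistribution-on-the-torus input precisely here.
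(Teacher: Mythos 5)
Your overall architecture --- irreducibility, then a Foster--Lyapunov/tightness argument producing a stationary probability measure, then the classical equivalence with positive recurrence --- is exactly the paper's, and you have correctly located both the main difficulty and the tool that resolves it. But the proposal stops precisely where the proof has to start. The one-step drift inequality in your step (ii), $\mb E[\norm{\cX_{i+1,z}}\mid\cX_{i,z}=y]\le\norm{y}-\varepsilon$ off a bounded set, is false as stated: a division requires a step $a$ with $a\equiv -y\bmod k$, and for a general $\mu$ satisfying~\eqref{assump:1} the support of $\mu$ need not meet that residue class at all, so the one-step conditional expectation can exceed $\norm{y}$ for arbitrarily large $\norm{y}$. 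You acknowledge this and defer the repair to ``where the real work lies'' --- an accurate description of the paper, but it means the central estimate is absent from the proposal.

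Concretely, what is missing is the content of Lemmas~\ref{lem:uniform lower bound}, \ref{lem:postive proportion of primes} and~\ref{lem:norm is contracted}. The paper produces a single finite word $a\in(\supp\mu)^{n_0}$ whose partial sums cover a full copy of the discrete torus $\DT k d$; this yields a lower bound $\alpha>0$ for the probability of at least one division within $n_0$ steps which is \emph{uniform over the starting residue}, and that uniformity is what permits chopping the walk into independent blocks of length $n_0$ and applying a Chernoff bound to get at least $\delta n$ divisions in $n$ steps outside an event of probability $O(e^{-cn})$. Combined with the trivial bound~\eqref{eq:lypexp} this gives the $n_0'$-step drift $\int\norm{x}\dv{\om^k_{n_0',z}}x\le c\norm{z}+M$ with $c<1$ for the function $\norm{\cdot}$ itself --- no $\log$ or truncation is needed, since each division divides the norm by $k$ while the steps only add $\kappa$ per unit time, so the multiplicative gain $k^{-\delta n}$ beats the additive growth $\kappa n$ once $n$ is a fixed large constant; iterating the $n_0'$-step operator then gives tightness. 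Separately, your irreducibility sketch (realize $y-x$ as a sum of steps whose partial sums all avoid the locus divisible by $k$) needs more care than inserting one preliminary step: a long path's partial sums can occupy every residue class mod $k$, so no single prepended step clears them all. The paper instead reduces, via absolute continuity of $\frac{1}{2d}\sum\delta_{\pm e_i}$ with respect to a Ces\`aro average of convolution powers (Lemma~\ref{lem:abscont}), to the nearest-neighbour walk and proves connectivity directly, with divisions permitted along the way.
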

The irreducibility implies that the state space of the Markov chain $\on M_z^k$ is $\Xk$ for all $z\in\Xk$. 
Hence, we see from the definition of positive recurrence that if $\on M_z^k$ is positive recurrent for some $z\in\Xk$ then it will be positive recurrent for all $z\in\Xk$.
\subsection{Invariant measures}\label{ssec:invmus}
The strategy used to prove Theorems~\ref{thm:MC1} and~\ref{thm:MC2} is first to show that for measures satisfying~\eqref{assump:1} the Markov chains $\on M_k(\mu)$ are irreducible for all $k\in\bN\cup\set{0}$.
For a time homogeneous Markov chain $\on M=\set{\cX_i}_{i=0}^\infty$ consisting of $X$ valued random variables with transition probabilities given by 
\begin{equation*}\mu_x(E)\defi\mb P[\cX_{i+1}\in E|\cX_i=x]\qfa\ x\in X,\ E\subseteq X\ \textrm{and}\ i\in \No\ :\ \mb P_\mu[\cX_i=x]>0\end{equation*} 
a measure $\nu\in\cP(X)$ is said to be a \emph{stationary measure} (or sometimes invariant distribution) for $\on M$ if 
$\int_X\mu_x\dv{\nu}x=\nu.$

It is then a classical fact~\cite[Theorem 2, p. 543]{MR737192} that irreducible Markov chains with countable state spaces are positive recurrent if and only if there exists a stationary measure for the Markov chain. 
Moreover, the irreducibility implies that if a stationary measure exists then it is unique.

There is an obvious candidate for such a stationary measure and this is the object which we will study in the rest of this note.

First let us
introduce the 
notation $\cP(X)$ for the set of probability measures on $X$, for some topological space $X$. 
Let $B\defi(\Xz)^\bN$ be the space of infinite sequences of elements in $\Xz$ and $\be\defi\mu^{\otimes \bN}\in\cP(B)$ the Bernoulli measure.

Given $k\in\bN\cup\set{0}$, $b\in B$ and $z\in\X$ we will consider the random walks corresponding to $b$ starting at $z$ given by the sequences
\begin{equation}\label{eq:rw} \set{b_n\act \dots \kact b_1 \kact z }_{n\in\bN},\end{equation}
where $\hat{+}_0\defi\act$.
In order to study stationary measures for these random walks and the associated Markov chains we consider the measures supported on the sets of points which are end points of walks of length $n$.
For $n\in\bN$ and $z\in\X$ we define $\om^0_{n,z}\in\cP(\X)$ to be the measure  
\begin{equation}\label{eq:defofnthtrans}\om^k_{n,z}(E)=\int_B \bone_E(b_n\kact\dots\act b_1\act z)\dv{\be}b\qfa\ E\subseteq \Xk.\end{equation}
Now the natural candidates for invariant measures for the Markov chains $\on M_k(\mu)$ are given by weak-* limits of the following sequence 
\begin{equation}\label{eq:defofinvmu}
\set*{\frac{1}{n}\sum_{i=1}^n\om^k_{i,z}}_{n\in\bN}.\end{equation}
Using the definition of an invariant measure one can see that such limit points will be stationary.
Therefore, the main difficulty is to show that the any weak-* limit of the sequence in~\eqref{eq:defofinvmu} is a probability measure. 
One often thinks of this as being a `nonescape of mass' property of the sequence of measures~\eqref{eq:defofinvmu}. 
%
%
We will prove the following theorem.
\begin{thm}\label{thm:main1}
Suppose that $\mu\in\cP(\bZ^d)$ satisfies~\eqref{assump:0} 
and~\eqref{assump:1}.
Then, for all $k\in\bN\cup\set{0}$ and $z\in\X$ any weak-* limit of the sequence in~\eqref{eq:defofinvmu} is a stationary probability measure for the Markov chain $\on M_z^k(\mu)$.
Moreover, there is unique limit point which we denote $\om^k_\infty$.
\end{thm}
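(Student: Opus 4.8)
The plan is to split the statement into two assertions: first, that every weak-$*$ limit point of the sequence~\eqref{eq:defofinvmu} is a stationary probability measure, and second, that there is only one such limit point. For the first assertion, the soft part is standard: by a Krylov–Bogolyubov–type argument, if $\nu$ is any weak-$*$ limit of Cesàro averages $\tfrac1n\sum_{i=1}^n\om^k_{i,z}$, then applying the transition operator and using $\om^k_{i+1,z}=\int_{\Xz}(\,\cdot\,)\circ(x\kact\,)\dv\mu x$ applied to $\om^k_{i,z}$, the two sequences $\tfrac1n\sum_{i=1}^n\om^k_{i,z}$ and $\tfrac1n\sum_{i=1}^n\om^k_{i+1,z}$ have the same limit points, so $\nu$ is fixed by the transition operator, i.e. stationary — provided $\nu$ is a probability measure and not just a subprobability. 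So the real content of the first assertion is the nonescape-of-mass property: one must show $\nu(\Xk)=1$.

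For nonescape of mass I would exploit the mechanism advertised in the introduction, namely the equidistribution of the underlying random walk on the discrete torus. The idea is that $\om^k_{n,z}$ is the image under the reduction/division map of the law of the honest random walk $b_n+\dots+b_1+z$ on $\bZ^d$; because $\mu$ has a finite first moment~\eqref{assump:0}, this walk spreads at a controlled (sublinear-in-a-suitable-sense, or at least tight-after-rescaling) rate, while reduction modulo $k^p$ (resp. modulo $\gcd$) equidistributes the mass over residues. Concretely I would fix a large radius $R$ and bound $\sum_{i=1}^n\om^k_{i,z}(\{x:\norm x>R\})$ from above by estimating, for each scale, the probability that the walk of length $i$ lands in a ball of radius $\approx k^pR$ but is divisible by $k^p$; summing over $p$ and using that divisibility by $k^p$ has density $k^{-pd}$ (via~\eqref{assump:1} to guarantee the walk actually equidistributes in $(\bZ/k^p\bZ)^d$) against the walk being in a ball of radius $\approx k^pR$ (whose probability is $O((k^pR/\sqrt i)^d)$ by the local CLT / finite-first-moment dispersion bound) gives a geometric series in $p$ that is $O(R^d/i^{d/2}\cdot \text{const})$, uniformly enough that the Cesàro average stays bounded and the tail in $R$ goes to $0$. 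This is the step I expect to be the main obstacle: making the equidistribution-versus-dispersion trade-off quantitative and uniform in $i$, and handling the boundary cases $d=1,2$ and small $i$ separately.

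For the uniqueness of the limit point I would use irreducibility. By Theorem~\ref{thm:MC1} and Theorem~\ref{thm:MC2} the Markov chain $\on M_z^k(\mu)$ is irreducible and positive recurrent (once the existence of a stationary probability measure is established, which the first part provides), and it is the classical fact quoted after~\eqref{eq:defofinvmu} that an irreducible positive recurrent chain on a countable state space has a \emph{unique} stationary probability measure. Since every weak-$*$ limit point of~\eqref{eq:defofinvmu} is such a stationary probability measure, they must all coincide; call it $\om^k_\infty$. One small technical point to address is that weak-$*$ convergence on the discrete (hence locally compact, non-compact) space $\Xk$ only a priori yields a subprobability limit, so the argument genuinely needs the nonescape-of-mass input from the previous paragraph before the uniqueness conclusion can be invoked — the logical order is: irreducibility (already proved) $\Rightarrow$ a limit point exists and, by nonescape, is a stationary probability measure $\Rightarrow$ positive recurrence $\Rightarrow$ uniqueness of the stationary probability measure $\Rightarrow$ uniqueness of the limit point.
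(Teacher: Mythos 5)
Your treatment of stationarity (the Krylov--Bogolyubov argument) and of uniqueness (irreducibility plus the classical fact that an irreducible positive recurrent chain on a countable state space has a unique stationary distribution, in the logical order existence of a stationary probability measure $\Rightarrow$ positive recurrence $\Rightarrow$ uniqueness) is correct and coincides with the paper's proof. The gap is in the nonescape-of-mass step, and it is fatal as written. Your estimate rests on identifying $\om^k_{n,z}$ with the pushforward of the law of the ordinary walk $b_n+\dots+b_1+z$ under a single ``divide by the maximal power of $k$'' map applied at time $n$. This identification is false: in the definition of $\kact$ the divisions are interleaved with the additions, and the two operations do not commute. For instance, with $k=2$, $z=(1,1)$ and every step equal to $(1,1)$, one has $(1,1)\kact(1,1)=(1,1)$, so the chain never moves, whereas the ordinary walk sits at $(n+1,n+1)$ and its end-reduction is unbounded. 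Worse, if the identification were true the theorem would be false: the law of $\Sigma_i\defi b_i+\dots+b_1+z$ spreads out, so $\mathbb P[\norm{\Sigma_i}>R]\to 1$ for fixed $R$, while the set of $w$ with $k\nmid\gcd(w)$ has density $1-k^{-d}$; hence the $p=0$ term of your sum, $\mathbb P[\norm{\Sigma_i}>R,\ k\nmid\gcd(\Sigma_i)]$, tends to a positive constant and no tightness can come out. (Also, under your own heuristic each term of the sum over $p$ is of size $R^d i^{-d/2}$ independently of $p$, so it is not a geometric series.)

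What the paper does instead is prove a Foster--Lyapunov drift inequality for the function $z\mapsto\norm z$, which is where the interleaving of the divisions is actually used. Equidistribution on the discrete torus (Lemmas~\ref{lem:uniform lower bound} and~\ref{lem:postive proportion of primes}) together with a Chernoff bound for the independent block variables $\cU_{n_0}^k\circ S^{in_0}$ shows that, outside an event of probability $O(e^{-cn})$, a walk of length $n$ undergoes at least $yn$ divisions by $k$, each of which shrinks the contribution of the starting point by a factor of $k$. This yields $\int_\Xk\norm x\dv{\om^k_{n'_0,z}}x\le c\norm z+M$ with $c<1$ (Lemma~\ref{lem:norm is contracted}), which iterates to $\int_\Xk\norm x\dv{\om^k_{n,z}}x\le c^{\lfloor n/n'_0\rfloor}\norm z+M'$ (Corollary~\ref{cor:geometric series}); Markov's inequality then gives the tightness required by Definition~\ref{def:rec}, and the rest of your argument applies. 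No local CLT or dispersion estimate is needed; your proposal would have to be rebuilt around such a contraction argument to close.
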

It follows from the discussion at the start of this subsection that once we have shown irreducibility (c.f. \S\ref{ssec:irred}), Theorems~\ref{thm:MC1} and~\ref{thm:MC2} follow directly from Theorem~\ref{thm:main1}.
We also remark that the stationary measure satisfies $\om^k_\infty(z)=1/\ta_k(z)$ for all $z\in\Xk$ where $\ta_k(z)$ is the expected return time of the random walk (corresponding to $k$) to $z$.
Hence, it would be interesting to obtain estimates for $\ta_k(z)$.
\subsection{Figures}        
Using a computer algebra package it is possible to simulate fairly long walks on the spaces $\Xk$ and $\X$.
The following (`randomly picked') measures were used to generate random walks.
\begin{align*}
\eta_1 \defi &\frac{1}{200}(13\del_{e_1}+3\del_{e_2}+35\del_{e_3}+36\del_{e_4}+36\del_{-e_1}+30\del_{-e_2}+42\del_{-e_3}+5\del_{-e_4})\\
\eta_2 \defi &\frac{1}{305}(13\del_{e_1} +3\del_{e_2}+ 35\del_{e_3}+36\del_{e_4} +5\del_{e_5} +42\del_{e_6}\\
& +16\del_{-e_1} +36\del_{-e_2}+4\del_{-e_3}+49\del_{-e_4}+36\del_{-e_5}+30\del_{-e_6})\\
\eta_3 \defi &\frac{1}{51}(11\del_{e_1}+12\del_{e_2}+8\del_{e_3}+9\del_{-e_1}+2\del_{-e_2}+9\del_{-e_3})\\
\end{align*}
In order to visualise the random walks, in particular the recurrence properties of the random walk, the norms of all points in the walks were calculated.
This data was then used to plot histograms as displayed in Figure~\ref{fig:3}.

\begin{figure}[h]
\begin{subfigure}{0.49\textwidth}
\centering
\includegraphics[height=12em]{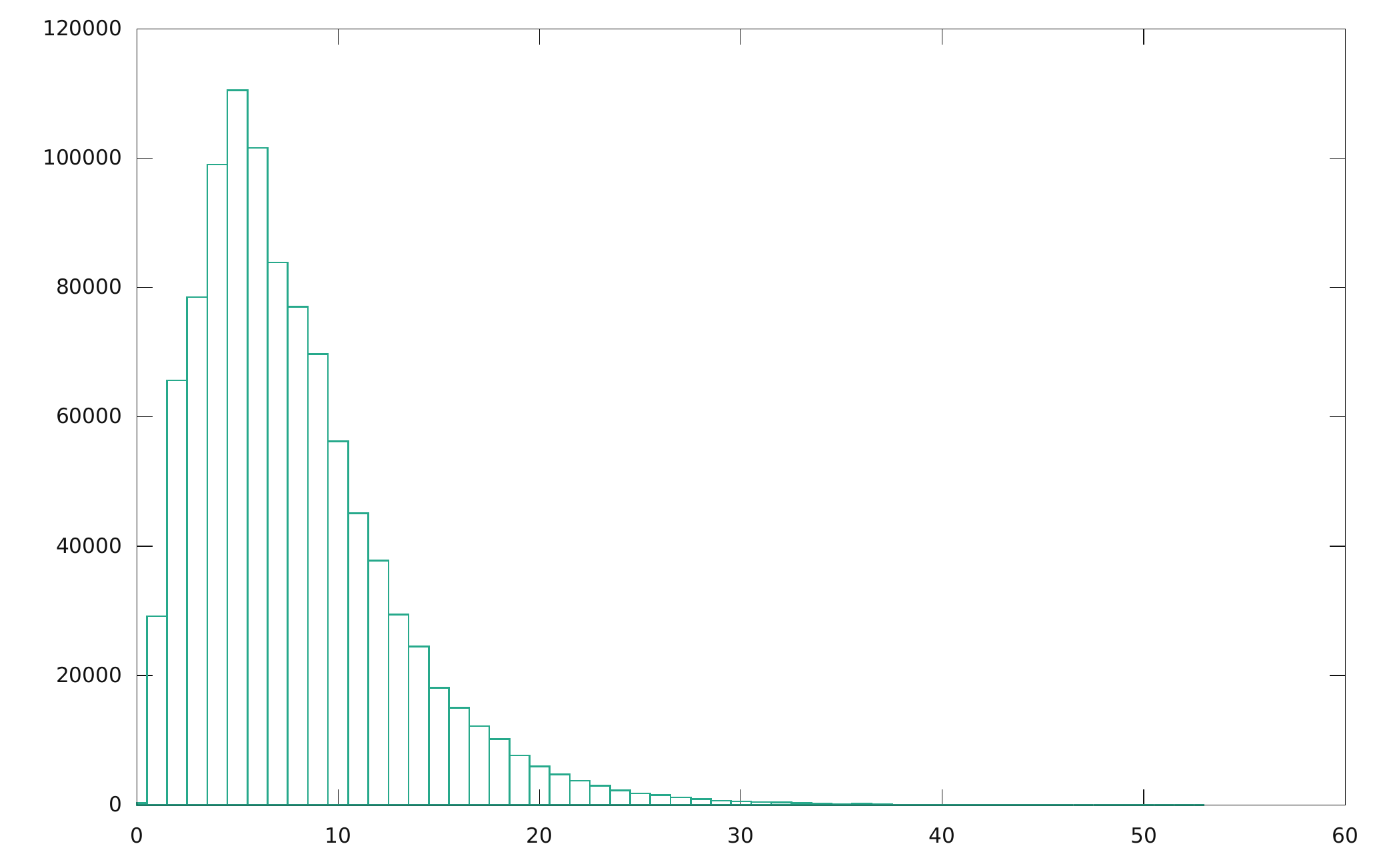}
\caption{Walk on $\bZ^4_0$ generated by $\eta_1.$} 
\label{fig:subim1}
\end{subfigure}
\begin{subfigure}{0.49\textwidth}
\centering
\includegraphics[height=12em]{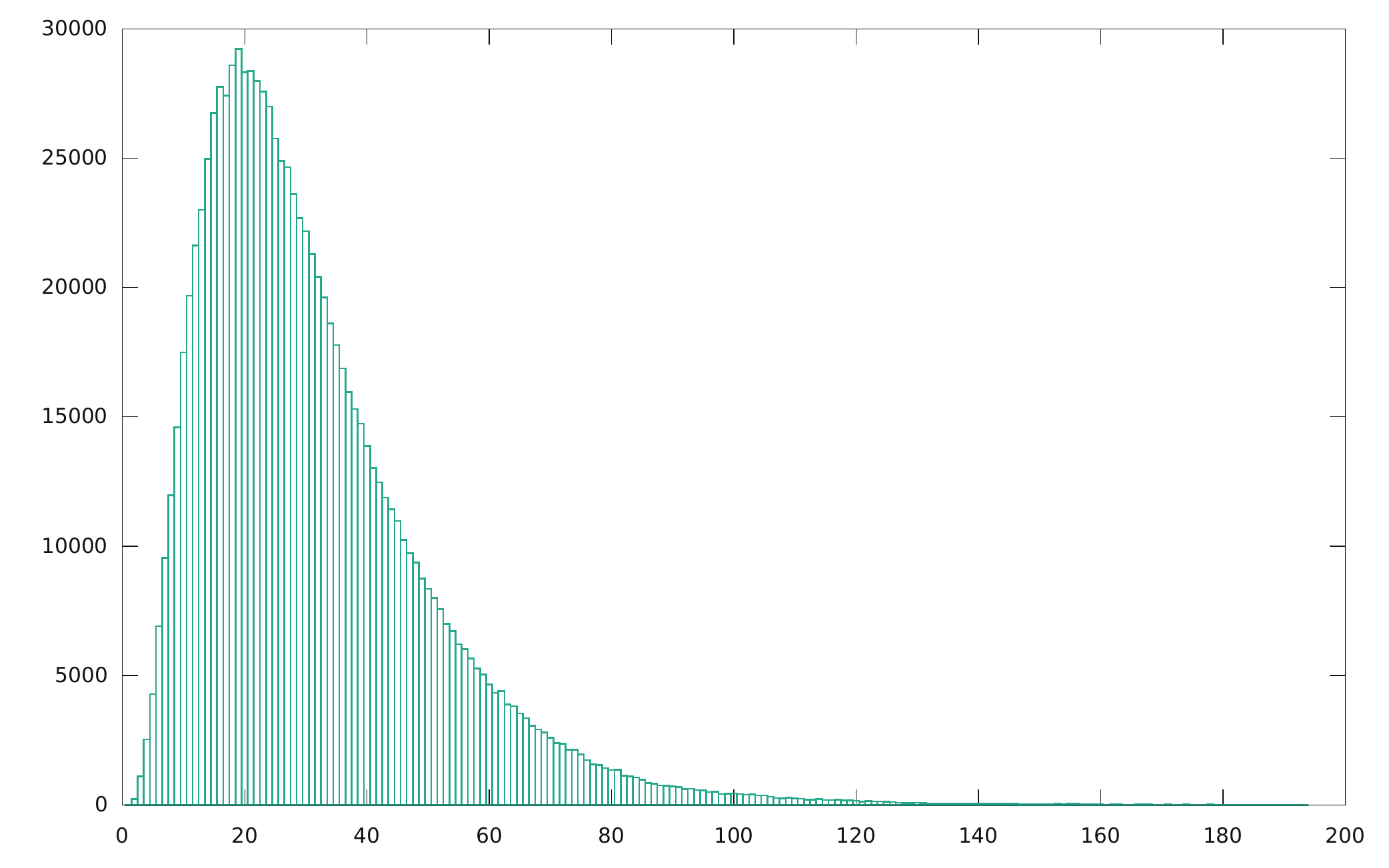}
\caption{Walk on $\bZ^6_0$ generated by $\eta_2$.}
\label{fig:subim2}
\end{subfigure}
\begin{subfigure}{0.49\textwidth}
\centering
\includegraphics[height=12em]{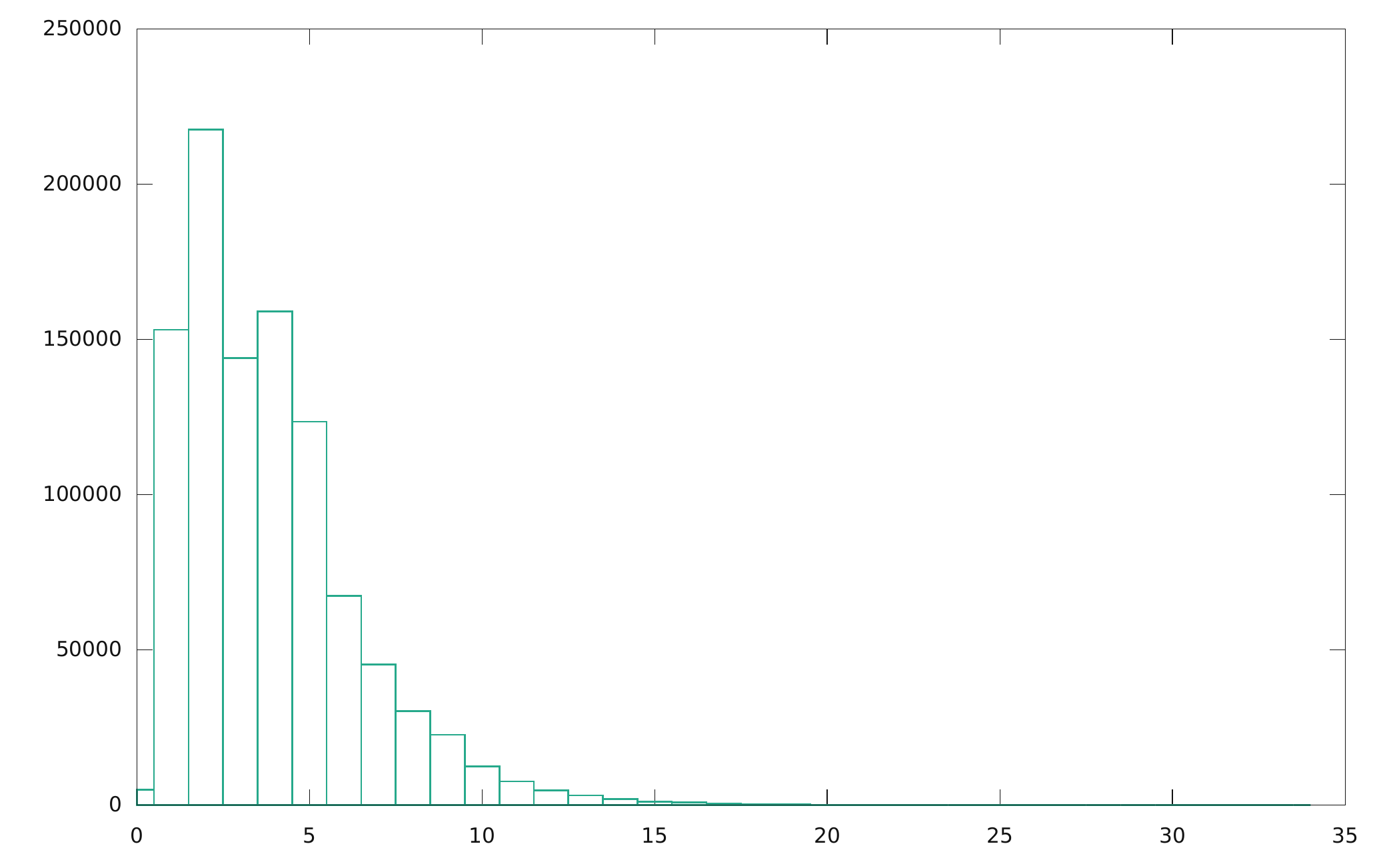}
\caption{Walk on $\bZ_2^3$ generated by $\eta_3$.}
\label{fig:subim3}
\end{subfigure}
\begin{subfigure}{0.49\textwidth}
\centering
\includegraphics[height=12em]{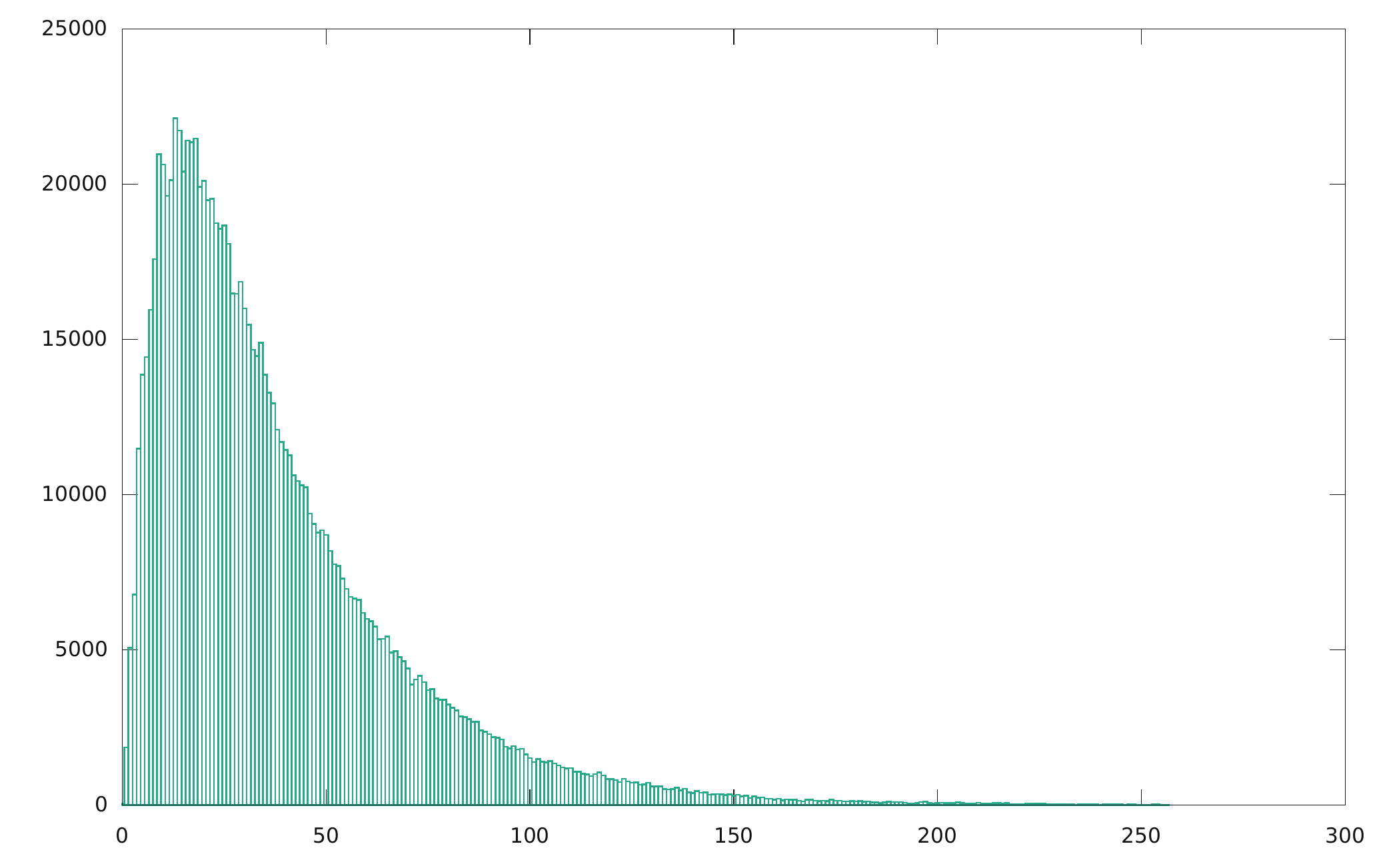}
\caption{Walk on $\bZ_5^3$ generated by $\eta_3$.}
\label{fig:subim4}
\end{subfigure}
\caption{Histograms showing frequencies of norms of vectors in the first 1,000,000 points of various walks.
The frequencies are indicated on the vertical axis and the norms of vectors in the walk on the horizontal.}
\label{fig:3}
\end{figure}

\section{Proof of main results}
In this section we will complete the proofs of Theorems~\ref{thm:MC1} and~\ref{thm:MC2}.
As previously remarked, this is done in two steps; first we show that the Markov chains are irreducible and second we will prove Theorem~\ref{thm:main1}.

In~\S\ref{ssec:irred} we will 
see how assumption~\eqref{assump:1} is used to show that the corresponding Markov chain is irreducible.
In~\S\ref{ssec:rec} we will introduce a strong recurrence property for random walks and show that if a walk has this properties then the 
limit in~\eqref{eq:defofinvmu} converges.
In~\S\ref{ssec:disctor} we study random walks on the discrete torus. 
Using the fact that these walks tend to be equidistributed we derive a lower bound for the expected number of times division by a prime should
occur in a walk of length $n$.
In~\S\ref{ssec:contnorms} we use the bounds from the previous section to show that on average the norm is contracted by the random walks.
This fact will then be used to show that the random walks satisfy the recurrence property introduced in~\S\ref{ssec:rec}.
%
\subsection{Irreducibility}\label{ssec:irred}
We recall that a Markov chain $\on M=\set{\cX_i}_{i=0}^\infty$ consisting of $X$ valued random variables for some countable state space $X$ is said to be \emph{irreducible} if for all $x,y\in X$ with $\mb P\br{\cX_0=x}>0$, there exists $n\in\bN$ such that $\mb P[\cX_n=y|\cX_0=x]>0$.

For $k,n\in\bN\cup\set{0}$, $z\in \Xk$ and $b\in B$ let $\Sig_n^k(b,z)$ be the position of the random walk in $\Xk$ 
corresponding to the sequence $b$ starting at $z$ after $n$ steps.
In other words
\begin{equation}\label{eq:defofsig}\Sig_n^k(b,z)\defi b_n\kact\dots\kact b_1\kact z,\end{equation}
where as before $\hat{+}_0\defi\act$.
We view $\Sig_n^k(-,z):B\to\Xk$ as random variables so that 
$\on M_z^k(\mu)=\set{\Sig_i^k(-,z)}_{i=0}^\infty$ and
\begin{equation}\label{eq:nstepprob}\mb P_{\mu}[\Sig_n^k(b,z)=x]=\mu^{\ot n}\mset{a\in (\Xz)^n:a_n\kact\dots\kact a_1\kact z=x},\end{equation}
where 
$\on M_z^k(\mu)$ is the Markov chain defined in~\S\ref{ssec:mainresults}.

We start with the following general lemma.
\begin{lem}\label{lem:abscont}
    Let $k\in\bN\cup\set{0}$, $z\in\Xk$ and $\on M_z^k$ be the Markov chains defined in~\S\ref{ssec:mainresults}.
If $\mu_1,\mu_2\in\cP(\bZ^d)$ such that $\mu_1$ is absolutely continuous with respect to $\mu_2$ and $\on M_z^k(\mu_1)$ is irreducible. Then $\on M_z^k(\mu_2)$ is irreducible.
\end{lem}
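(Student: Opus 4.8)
The plan is to unwind the definition of irreducibility and transfer the existence of a positive-probability path from $\mu_1$ to $\mu_2$ by comparing the supports of the two measures. The key observation is that absolute continuity $\mu_1\ll\mu_2$ forces $\operatorname{supp}\mu_1\subseteq\operatorname{supp}\mu_2$: if $\mu_1(a)>0$ for some $a\in\Xz$, then necessarily $\mu_2(a)>0$, since $\mu_2(a)=0$ would make $\set{a}$ a $\mu_2$-null set on which $\mu_1$ is positive. (Here both measures are discrete, living on the countable set $\Xz$, so absolute continuity is exactly the statement about atoms.)

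First I would fix $x,y\in\Xk$ and use the irreducibility of $\on M_z^k(\mu_1)$ together with~\eqref{eq:nstepprob} to produce $n\in\bN$ and a finite sequence $a=(a_1,\dots,a_n)\in(\Xz)^n$ with $\mu_1^{\otimes n}(a)>0$ and $a_n\kact\dots\kact a_1\kact x=y$; strictly speaking irreducibility gives $\mb P_{\mu_1}[\Sig_n^k(b,x)=y]>0$, and since this is a countable sum of terms $\prod_{i=1}^n\mu_1(a_i)$ over sequences hitting $y$, at least one such sequence $a$ must have every $\mu_1(a_i)>0$. By the support containment just noted, $\mu_2(a_i)>0$ for each $i$, so $\mu_2^{\otimes n}(a)=\prod_{i=1}^n\mu_2(a_i)>0$. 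Feeding this same sequence $a$ back into~\eqref{eq:nstepprob} for $\mu_2$ gives $\mb P_{\mu_2}[\Sig_n^k(b,x)=y]\geq\mu_2^{\otimes n}(a)>0$. Since $x,y$ were arbitrary, $\on M_z^k(\mu_2)$ is irreducible.

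One minor point to address: the definition of irreducibility in~\S\ref{ssec:irred} quantifies over states $x$ with $\mb P[\cX_0=x]>0$, i.e.\ over the actual state space of the chain, but the remark after Theorem~\ref{thm:MC2} notes that irreducibility of $\on M_z^k$ is equivalent to its state space being all of $\Xk$; I would phrase the argument so that the path from $x$ to $y$ is constructed for every pair $x,y\in\Xk$, which is the cleanest formulation and is exactly what~\eqref{eq:nstepprob} supplies. No genuine obstacle arises here — the only thing to be careful about is the passage from a positive $n$-step transition probability to a single concrete positive-probability step sequence, which is the elementary fact that a convergent sum of nonnegative terms with positive total has a positive summand.
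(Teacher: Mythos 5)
Your proof is correct and is essentially the paper's argument run in the forward direction: the paper argues by contradiction that a $\mu_2^{\otimes n}$-null set of step sequences is $\mu_1^{\otimes n}$-null, while you use the equivalent atom-level fact that $\mu_1\ll\mu_2$ forces $\operatorname{supp}\mu_1\subseteq\operatorname{supp}\mu_2$ and transfer a single positive-probability sequence. Both hinge on the same discrete characterisation of absolute continuity, so there is no substantive difference.
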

\begin{proof}
    First note that for $\mu_1$ and $\mu_2$ satisfying the hypothesis of the lemma,
    $\mu_1^{\otimes n}$ is absolutely continuous with respect to $\mu_2^{\otimes n}$ for all $n\in\bN$.
    For a contradiction, suppose that $\on M_z^k(\mu_2)=\set{\cX_{i,z}}_{i=0}^\infty$ is not irreducible and thus, there exists $x,y\in\Xk$ such that $\mb P_{\mu_2}[\cX_{0,z}=x]>0$ and 
    \begin{equation*}\mb P_{\mu_2}[\cX_{i,z}=y|\cX_{0,z}=x]=0\qfa\ i\in\bN.\end{equation*}
    This implies that $x=z$ and hence $\mb P_{\mu_2}[\cX_{i,z}=y|\cX_{0,z}=x]= \mb P_{\mu_2}[\Sig_i^k(b,z)=y]$.
    Thus, using~\eqref{eq:nstepprob} we see that 
    \begin{equation*}
        \mu_2^{\otimes i}\mset{a\in (\Xz)^i:y=a_i\kact\dots\kact a_1\kact x}=0\qfa\ i\in\bN
    \end{equation*}
    and hence using the absolute continuity
    \begin{equation*}
        \mu_1^{\otimes i}\mset{a\in (\Xz)^i:y=a_i\kact\dots\kact a_1\kact x}=0\qfa\ i\in\bN.
    \end{equation*}
    Reversing the argument we see that this implies that $\mb P_{\mu_1}[\cX_{i,z}=y|\cX_{0,z}=x]=0$ for all $i\in\bN$. 
    Since $\on M_z^k(\mu_1)$ is assumed to be irreducible, this is a contradiction as required.
\end{proof}
Now we are ready to prove our main result of this subsection.
\begin{prop}\label{prop:irred}
    For all $k\in\No$ and $z\in\Xk$ and measures 
    $\mu\in\cP(\bZ^d)$ satisfying~\eqref{assump:1} the Markov chain $\on M_z^k(\mu)$ is irreducible.
\end{prop}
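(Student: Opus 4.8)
The plan is to reduce to the case of a finitely supported measure using Lemma~\ref{lem:abscont}, and then to construct explicit finite sequences of steps that move the walker between any two prescribed primitive (resp.\ $k$-coprime) points. First I would observe that by~\eqref{assump:1} there is some finite subset $S\subseteq\bZ^d$ with $\mu(s)>0$ for all $s\in S$ such that $S$ still generates $\bZ^d$ as a group; indeed, since every basis vector $e_i$ lies in the support of some convolution power $\mu^{\ast n_i}$, one can find finitely many points of $\on{supp}(\mu)$ whose $\bZ$-span contains $e_1,\dots,e_d$. Let $\mu_1$ be (a suitable normalisation of) the restriction of $\mu$ to such a finite generating set; then $\mu_1$ is absolutely continuous with respect to $\mu=\mu_2$, so by Lemma~\ref{lem:abscont} it suffices to prove irreducibility for $\mu_1$, i.e.\ we may assume $\on{supp}(\mu)$ is finite and generates $\bZ^d$.

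Next I would establish irreducibility by exhibiting, for each $x,y\in\Xk$, a finite word $a_1,\dots,a_n$ in $\on{supp}(\mu)$ with $a_n\kact\dots\kact a_1\kact x=y$; by~\eqref{eq:nstepprob} this gives $\mb P_\mu[\Sig^k_n(b,x)=y]>0$. The key idea is that one step $a\kact w=\tfrac{a+w}{k^p}$ with $p=0$ is just ordinary translation $w\mapsto a+w$, and this happens precisely when $k\nmid\gcd(a+w)$. So the first goal is a reachability statement for \emph{translations}: since $\on{supp}(\mu)$ generates $\bZ^d$, any vector $v\in\bZ^d$ can be written as $v=\sum_{j}m_j s_j$ with $s_j\in\on{supp}(\mu)$, $m_j\in\bZ$; because $\on{supp}(\mu)$ generates the group, negatives of generators are also reachable as finite sums of generators, so $v$ is in fact a sum $s_{j_1}+\dots+s_{j_N}$ of (not necessarily distinct) elements of $\on{supp}(\mu)$. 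Adding these one at a time to $x$ realises the translation $x\mapsto x+v$ — \emph{provided} no intermediate partial sum is divisible by $k$ in all coordinates. The second goal is therefore to route around the ``bad'' points: I would first take one or two preliminary steps from $x$ to some convenient anchor point (e.g.\ a vector with a coordinate equal to $\pm1$, which is automatically $k$-coprime and stays so under many translations), then translate to a similar anchor near $y$, and finally — if $k\ge1$ and a genuine division is needed to land exactly on $y$ (note for $k=0$, $\act$ divides by the gcd, so reaching $y$ from $Ny$ for suitable $N$ uses one divide) — arrange a final step $a+w=k^p\cdot y$ for an appropriate $p$ and $w$ previously reached. For $k=0$ one uses that $N\cdot y$ is reachable from $y$ by translations for large $N$ avoiding the origin, and $e_1\act(N y - e_1 \text{-type adjustments})$; more simply, from any $x$ translate to $2y$ if $2y$ has gcd $2$... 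I would instead pick a step $a$ with $a+x' = \gcd(a+x')\,y$.

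The main obstacle I anticipate is precisely the bookkeeping in the previous paragraph: guaranteeing that the chosen sequence of intermediate lattice points never accidentally becomes divisible by $k$ (which would trigger an unwanted division and derail the computation), and simultaneously arranging exactly one controlled division at the end to hit the target $y$ on the nose. I expect this to be handled by a robustness argument: the set of lattice points coprime to $k$ is ``most'' of $\bZ^d$, and one has a lot of freedom in ordering the partial sums $s_{j_1}+\dots+s_{j_N}$ and in inserting harmless back-and-forth moves $+s,-s$; with a coordinate pinned to $\pm1$ at the relevant stages, $k$-coprimality is automatic. So the core of the write-up will be: (i) finite generating support via~\eqref{assump:1} and Lemma~\ref{lem:abscont}; (ii) reachability of arbitrary translations within $\Xk$ by careful choice of path; (iii) a final divide step to correct the target, completing the proof that $\mb P_\mu[\Sig^k_n(b,x)=y]>0$ for some $n$.
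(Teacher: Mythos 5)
Your overall skeleton matches the paper's: reduce via Lemma~\ref{lem:abscont} to a simpler measure, then exhibit explicit words moving the walker between any two states, using points with a coordinate equal to $\pm1$ as safe anchors (this is exactly the paper's device for $d\geq 3$). But there are two genuine gaps. First, your reduction lands in the wrong place. Restricting $\mu$ to a finite generating subset $S$ of its support leaves you with arbitrary step vectors, and then even realising a single translation $z\mapsto z+e_1$ as a word in $S$ is problematic: the intermediate partial sums may trigger unwanted divisions, and with arbitrary generators there is no clean way to ``route around'' this. The paper instead uses assumption~(B) to get that $\nu\defi\frac{1}{2d}\sum_{i=1}^d\delta_{\pm e_i}$ is absolutely continuous with respect to $\lambda_n\defi\frac{1}{n}\sum_{i=1}^n\mu^{*i}$, and applies Lemma~\ref{lem:abscont} to reduce to the nearest-neighbour measure $\nu$; it is this reduction to unit steps that makes the subsequent path construction tractable. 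Your version of step (ii) is therefore not just unfinished bookkeeping --- it is the hard part, and the reduction you chose makes it harder than it needs to be.

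Second, the ``controlled final division'' that you flag as the main obstacle is precisely the content of the proof, and it is genuinely delicate in low dimension, not a matter of robustness. For $d=2$ and $k=0$ there are primitive points, e.g.\ $(6,35)$, all four of whose lattice neighbours are imprimitive, so no amount of reordering or back-and-forth insertion produces a translation path inside $\bZ_0^2$; a division is unavoidable and must be engineered. The paper does this by invoking Dirichlet's theorem to find a prime $p=x_2n+1$, walking to $(nx_1,p)$ along primitive points, and then using the single step $(nx_1,p)\act(-e_2)=(x_1,x_2)$. Nothing in your proposal supplies such a mechanism, and your closing sentences (``pick a step $a$ with $a+x'=\gcd(a+x')\,y$'') assert the existence of the required configuration without constructing it. As written, the proposal identifies the right strategy but leaves both the correct reduction and the actual reachability argument unproved.
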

\begin{proof}
    One easily sees from the definition of irreducibility, that if the Markov chain $\on M_z^k(\mu)$ is irreducible for some $z\in\Xk$ then it is 
    irreducible for all $z\in\Xk$. 
    So we will set $z=0$. 
    Moreover, we will only treat the case when $k=0$, but it can be easily checked that the proof goes through without change for $k\in\bN$.
    Set $\on M\defi\on M_0^0$.
    
    Since we suppose that $\mu\in\cP(\bZ^d)$ satisfies assumption~\eqref{assump:1}, 
    there exists $n\in\bN$ such that $\nu\defi\frac{1}{2d}\sum_{i=1}^d\delta_{\pm e_i}$ is absolutely continuous with respect to
$\lam_n\defi\frac{1}{n}\sum_{i=1}^n\mu^{*i}$.
Therefore, if $\on M(\nu)$ is irreducible, by Lemma~\ref{lem:abscont} we see that $\on M(\lam_n)$ is irreducible.
It is easy to see that irreducibility of $\on M(\lam_n)$ implies the irreducibility of $\on M(\mu)$.

Hence, in order to complete the proof of the proposition, we must show that $\on M(\nu)=\set{\cX_i}_{i=0}^\infty$ is irreducible.
Or, in other words that for all $x,y\in\X$ such that $\mb P_\nu[\cX_0=x]>0$ there exists $n\in\bN$ such that $\mb P_{\nu}\br{\cX_n=y|\cX_0=x}>0$. 
Since $\mb P_\nu[\cX_0=x]>0$ if and only if $x=0$ we may assume that $x=0$.

We say that $y\in\X$ is \emph{connected to 0} if
there exists $n\in\bN$ and a sequence of points $\set{p_i}_{1\leq i\leq n}\subset\X$ such that $p_1=0$, $p_n=y$ and 
$p_i-p_{i+1}\in\set{\pm e_1,\dots,\pm e_d}$ for all $1\leq i\leq n-1$.

So we must show that every $x\in\X$ is connected to 0.
Suppose that $d\geq 3$. 
Since $\gcd(1,z)=1$ for all $z\in\bZ^{d-1}$, if $x=(1,x_2,\dots,x_d)$ then $x$ is connected to 0. 
Moreover, if $x\in\X$ is such that $\gcd(x_2,\dots,x_d)=1$ then as in the previous case $(1,x_2,\dots,x_d)$ is connected to 0 and 
we may connect $(1,x_2,\dots,x_d)$ with $x$ by adding or subtracting $e_1$ an appropriate number of times.
In the case that $x\in\X$ is such that $\gcd(x_2,\dots,x_d)>1$, since $d\geq 3$ we have $\gcd(x_2+1,x_3,\dots,x_d)=1$. 
Hence, by our previous argument $x+e_2$ is connected to 0, but this clearly implies that $x$ is connected to 0 and hence in the case that $d\geq 3$ we have shown that
every $x\in\X$ is connected to 0 as required.

Suppose that $d=2$. Let $x\in\bZ_0^2$ be arbitrary. 
There exists a prime $p$ and $n\in\bN$ such that $p=x_2 n+1$. 
It is clear that $(1,p)$ is connected to 0 and we may connect $(1,p)$ with $(nx_1,p)$ by adding or subtracting $e_1$ an appropriate number of times.
Since $(nx_1,p)\act (-e_2)=(x_1,x_2)$ this shows that $\mb P[\cX_n=x|\cX_0=0]>0$ for some $n\in\bN$.
To see that $\mb P[\cX_n=0|\cX_0=x]>0$ we note that for all $x\in\X$, there exists $n\in\bN$ such that 
the element $(1,x_2)$ occurs in the sequence $\set{x\act ie_1}_{\abs{i}\leq n}$. 
Without loss of generality, suppose that $x_2\leq 0$ so that $e_2\act(-e_1)\act(1,x_2)=(0,0)$ and hence we see that $\mb P[\cX_n=0|\cX_0=x]>0$ as required.
\end{proof}

\subsection{Recurrence}\label{ssec:rec}
Deciding if a random walk is recurrent or transient is one of the first basic steps towards understanding the long term behaviour of the random walk.
There are several related notations of recurrence of a random walk.
Following~\cite{MR2874934}, in this note we are interested in the following strong notion:
\begin{defn}\label{def:rec}    
For $k\in\bN\cup\set 0$ and $\mu\in\cP(\Xz)$, we say that the random walk generated by $\mu$ on $\Xk$ is \emph{recurrent} if for all $z\in \Xk$ and $\eps>0$ 
there exists a finite set $K_{\eps}\subset \Xk$ and $n_z\in\bN$ such that for all $n\geq n_z$ we have 
$$\om^k_{n,z}(K_{\eps})>1-\eps.$$ 
\end{defn}
The proof of Theorem~\ref{thm:main1} can be reduced to proving the following proposition.
\begin{prop}\label{prop:redux}
For all $k\in\bN$ and $\mu\in\cP(\Xz)$ satisfying~\eqref{assump:0} and~\eqref{assump:1} we have that the random walk induced by $\mu$ on $\Xk$ is recurrent.
\end{prop}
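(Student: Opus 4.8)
The plan is to establish a quantitative contraction estimate for the norm and then feed it into a standard drift (Foster--Lyapunov) argument adapted to the weak notion of recurrence in Definition~\ref{def:rec}. Concretely, I would work directly with the measures $\om^k_{n,z}$ and try to show that the expected norm $\int_{\Xk}\norm{x}\dv{\om^k_{n,z}}{x}$, though not necessarily decreasing at every step, cannot stay large: whenever the walker is at a point $y$ of large norm, the expected norm of $y\kact b$ (with $b$ chosen according to $\mu$) is strictly smaller than $\norm{y}$ by a definite multiplicative factor, up to a bounded additive error coming from the finite-first-moment hypothesis~\eqref{assump:0}. The mechanism for the contraction is exactly the one advertised in the introduction and developed in \S\ref{ssec:disctor}--\S\ref{ssec:contnorms}: when $y$ is large, the increments $y+b$ are, modulo any fixed prime $p$ (in particular modulo $k$ or a prime dividing $k$), close to equidistributed on the discrete torus $(\bZ/p\bZ)^d$, so with probability bounded below by a positive constant the sum $y+b$ is divisible by $k$, triggering a division by $k$ and hence a genuine multiplicative contraction of the norm by roughly $1/k$.

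The key steps, in order, would be: (1) Quote the equidistribution-based lower bound from \S\ref{ssec:disctor} for the probability that a division by $k$ occurs at a given step of a long walk; this is the heart of the matter and I would take it as given here since it is proved in that subsection. (2) Translate this into the drift inequality of \S\ref{ssec:contnorms}: there exist constants $0<\rho<1$, $C>0$ and $R>0$ such that for all $y\in\Xk$ with $\norm{y}\ge R$, $\int_{\Xz}\norm{y\kact b}\dv{\mu}{b}\le\rho\norm{y}+C$, while for $\norm{y}<R$ the left side is at most $R+C$ by~\eqref{assump:0}; set $V(x)\defi\norm{x}$. (3) Integrate this pointwise bound against $\om^k_{n,z}$ to get, writing $m_n\defi\int V\dv{\om^k_{n,z}}{}$, a recursion of the shape $m_{n+1}\le\rho\, m_n+C'$ for a possibly enlarged constant $C'$ absorbing the near-$R$ contribution; hence $\limsup_n m_n\le C'/(1-\rho)=:M<\infty$, uniformly in the starting point $z$ in the sense that $m_n\le M+1$ for all $n\ge n_z$. (4) Conclude recurrence in the sense of Definition~\ref{def:rec}: given $\eps>0$, Markov's inequality gives $\om^k_{n,z}\set{x:\norm{x}>(M+1)/\eps}\le m_n/((M+1)/\eps)\le\eps$ for $n\ge n_z$, and the set $K_\eps\defi\set{x\in\Xk:\norm{x}\le (M+1)/\eps}$ is finite because $\Xk\subseteq\Xz$ is a discrete set and norm-bounded subsets of $\Xz$ are finite; this is the required $K_\eps$ and $n_z$.

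Two technical points need care. First, in step (2) the contraction must survive averaging over one step of the walk: at a large point $y$, most increments $b$ from the (first-moment-bounded) measure $\mu$ are small compared to $\norm{y}$, so $\norm{y+b}\le\norm{y}+\norm{b}$ with the correction controlled in $L^1(\mu)$; the gain comes from the positive-probability event of a division by $k$, after which the norm drops to at most $(\norm{y}+\norm{b})/k$. Balancing the small "no-division" loss $\norm{b}$ against the definite "division" gain $(1-1/k)\norm{y}$ on an event of probability $\ge c>0$ yields the net factor $\rho=1-c(1-1/k)+o(1)<1$ once $R$ is large enough; this is a routine but slightly fiddly computation that I would present cleanly rather than grind through. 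Second, one must make sure the equidistribution input of \S\ref{ssec:disctor} applies at a \emph{single} step conditioned on the current location $y$, uniformly over $y$ with $\norm{y}$ large; since the increment is governed by the fixed measure $\mu$ and the relevant statement is about $y+b \bmod k$, this reduces to a statement about the pushforward of $\mu$ to $(\bZ/k\bZ)^d$ together with the hypothesis~\eqref{assump:1} guaranteeing that this pushforward is not supported on a proper coset/subgroup, so that powers of it charge the class $-y \bmod k$ with probability bounded below.

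The main obstacle I expect is step~(2)--(3): extracting a genuinely \emph{uniform} (in $y$, and ultimately in the starting point $z$) multiplicative drift from the equidistribution heuristic. The equidistribution statements on the discrete torus are asymptotic in the walk length, so one has to be careful that the "probability of a division at a typical step" bound is not merely an averaged statement over the whole walk but can be localized to produce a one-step drift inequality valid for every sufficiently large $\norm{y}$; alternatively one works with an averaged version over blocks of $L$ steps and proves $m_{n+L}\le\rho\, m_n+C'$, which is equally sufficient for step~(4). Reconciling the block version of the drift with the single-step transition structure, and handling the additive error from~\eqref{assump:0} over a block, is where the real work lies; once the drift inequality is in hand, steps~(1) and~(4) are short.
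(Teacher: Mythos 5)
Your plan is essentially the paper's proof: a uniform-over-residues lower bound on the probability that a division by $k$ occurs within a block of fixed length (obtained by forcing the walk to cover the discrete torus $(\bZ/k\bZ)^d$, which is Lemma~\ref{lem:uniform lower bound}), the resulting block drift inequality $\int_{\Xk}\norm{x}\dv{\om^k_{n'_0,z}}{x}\le c\norm{z}+M$ with $c<1$ (Lemma~\ref{lem:norm is contracted}), geometric iteration (Corollary~\ref{cor:geometric series}), and Markov's inequality on the finite sublevel set $K_\eps$. Only your block version is viable --- the one-step drift you lead with fails because under~\eqref{assump:1} the single-step pushforward of $\mu$ to $(\bZ/k\bZ)^d$ need not charge the class $-y$, and the division probability depends on $y\bmod k$ rather than on $\norm{y}$ being large --- but since you identify this and supply the block remedy yourself, the argument coincides with the paper's.
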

The proof of Proposition~\ref{prop:redux} will be our goal for the remainder of this note.
First we show that Theorem~\ref{thm:main1} immediately follows from it.
\begin{proof}[Proof of Theorem~\ref{thm:main1} assuming Proposition~\ref{prop:redux}]\label{rem:redux}
First we note that Proposition~\ref{prop:redux} implies that the walk generated by $\mu$ on $\X$ is recurrent. 
To see this, note that for all $n\in\bN$ and $z\in\X$ one has $\om^0_{n,z}(K)\ge\om^k_{n,z}(K)$ for all cones $K\subset \X\subset\Xk$.
Moreover, any finite subset of $\X$ can be contained in a cone and therefore one may assume that the subset $K_\eps$ in Definition~\ref{def:rec} is a cone.

It follows that for all $k\in\bN\cup\set 0$ and $z\in\Xk$ any weak-*
accumulation point of the sequence $\set{\frac{1}{n}\sum_{i=1}^n\om^k_{i,z}}_{n\in\bN}$ will be a probability measure. 
It is also clear that any such limit point will be a stationary measure for the Markov chain $\on M_z^k(\mu)$. 
Since $\Xk$ is countable and by Lemma~\ref{prop:irred} the Markov chains corresponding to the random walks are irreducible, 
it follows from~\cite[Theorem 2, p. 543]{MR737192} that the limit point 
will actually be unique and this is the claim of Theorem~\ref{thm:main1}.
\end{proof}
We remark that the fact that $\mu$ satisfies both of~\eqref{assump:0} and~\eqref{assump:1} is vital for the statement of Proposition~\ref{prop:redux} to hold.
As we saw in~\S\ref{ssec:irred},~\eqref{assump:1} is needed to guarantee irreducibility, but this does not mean it is not needed for Proposition~\ref{prop:redux}.
Indeed, if it was simply removed, it is possible that for certain starting locations $z\in\Xk$, the random walk corresponding to $\mu$ never visits points with a gcd divisible by $k$ and
hence the walk on the set of points coprime to $k$ starting at $z$ would behave more like a traditional random walk on a lattice.


\subsection{Equidistribution on the discrete torus}\label{ssec:disctor}
In this section we will study the ordinary random walk on $\Xz$.
In order to prove recurrence we must exploit the fact that for a long walk, a positive proportion of sites that it visits will have
a common divisor which is divisible by $k$.
In order to make this precise we study the corresponding random walk on the discrete torus. The above statement then just becomes an equidistribution property for such walks. 
For $n\in\bN$, $z\in\Xz$ and $b\in B$ we denote the position of the random walk in $\Xz$ corresponding to $b$ after $n$ steps by 
$$\Sig_n(b,z)\defi b_n+\dots+ b_1+ z.$$

We view the functions $\Sig_n(-,z):B\to\Xz$ as random variables.

We will be interested in how many times during the first $n$ steps of the walk we saw a $\on{gcd}>1$.
The aim is to use the fact that on the discrete torus after walking for a very long time the past and future tend to become independent of one another.
This will enable us to use results from probability theory concerning deviations from the expected value for sums of independent random variables.
With this in mind we will define some more random variables.
For $k\in\bN$, $n\in\bN$ and $z\in\Xz$ let
$$\cY^k_n(b,z)\defi\abs{\set{1\le i\le n:\Sig_i(b,z)\equiv 0\mod k}},$$
where for all $z\in\Xz$ we say that $z\equiv 0\mod k$ if $k\mid\gcd(z)$ or if $z=0$.
We view $\cY^k_n(-,z)$ as a random variable which records the number of times in the first $n$ steps of the random walk a common factor of $k$ appears. 
Let $\mb 1$ be the constant function with value 1 on $B\times\Xz$ and 
$$\cM^k_n(b,z)\defi\min\set{\cY^k_n(b,z),\mb 1}=
\begin{cases*}
1 & if $\cY_n^k(b,z)\geq 1$ \\
0 & otherwise.
\end{cases*}$$
Moreover, let $\DT k d$ denote the discrete $d$-dimensional torus of width $k$. 
We can identify 
\begin{equation}\label{eq:ident torus in zd}
\DT k d\cong\set{0,\dots,k-1}^d\subset\Xz 
\end{equation}
in the usual manner.
Let $$\cU_n^k(b)\defi\min\set{\cM_n^k(b,x):x\in \DT k d}.$$
As is customary, for measurable functions $f:B\to\bR$ and $E\subset\bR$ we use the notation 
$$\mb E[f]\defi\int_B f\dv\be{}\qand\mb P[f\in E]\defi\int_B \bone_{\set{b\in B:f(b)\in E}}\dv\be{}.$$
\begin{lem}\label{lem:uniform lower bound}
For all $k\in\bN$ there exists $n_0>0$ such that for all $n\geq n_0$ one has 
$$\mb{E}[\cU^k_n]>0.$$
\end{lem}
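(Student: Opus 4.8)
Fix $k\in\bN$. The statement asserts that for all sufficiently large $n$, with positive $\be$-probability \emph{every} residue class $x\in\DT kd$ is hit by the ordinary random walk (modulo $k$) within its first $n$ steps — equivalently, $\cU_n^k$ is the indicator of the event that all $k^d$ residues are visited in time $n$, and we must show this event has positive probability for $n$ large. The natural approach is: first establish that the induced walk on the finite group $\DT kd$ hits $0$ (the zero residue) in a single run of some bounded length with positive probability, using assumption~\eqref{assump:1}; then observe that by translation-invariance of the step distribution the same is true for hitting any prescribed residue $x$; and finally concatenate $k^d$ such runs, one for each residue, to get a single infinite sequence $b\in B$ along which all residues are visited, with the probability of this concatenated event being a positive (finite) product.

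First I would reduce mod $k$. For $b\in B$ let $\bar\Sig_n(b,z)\defi\Sig_n(b,z)\bmod k\in\DT kd$; since $\Sig_n(b,z)=b_n+\dots+b_1+z$ is just the sum of the steps, $\bar\Sig_n$ depends only on the steps mod $k$, and $\{\bar\Sig_n(-,z)\}_n$ is the classical random walk on the abelian group $\DT kd$ driven by the pushforward $\bar\mu$ of $\mu$. Assumption~\eqref{assump:1} says $\on{supp}(\mu)$ generates $\Xz$, hence $\on{supp}(\bar\mu)$ generates $\DT kd$; this is exactly the irreducibility hypothesis for a random walk on a finite group. Therefore there is $N\in\bN$ and $c>0$ such that, starting from the zero residue, $\mb P_{\bar\mu}[\bar\Sig_m = x \text{ for some } 1\le m\le N] \ge c$ for every $x\in\DT kd$ — indeed by irreducibility each fixed target $x$ is reached with positive probability within some number of steps depending on $x$, and one takes $N$ to be the maximum over the finitely many targets and $c$ the minimum of the finitely many probabilities. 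A small point to handle: the definition of $\cM_n^k$ and $\cU_n^k$ requires $\Sig_i(b,x)\equiv 0\bmod k$, i.e. $\bar\Sig_i(b,x)=0$; since $\bar\Sig_i(b,x)=\bar\Sig_i(b,0)+\bar x$, hitting $0$ from start $x$ is the same event as hitting $-x$ from start $0$, so the uniform-in-target statement above is precisely what is needed.

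Next I would concatenate. Enumerate the residues $\DT kd=\{x^{(1)},\dots,x^{(k^d)}\}$. Using the Markov property at times $0, N, 2N, \dots$ (i.e. splitting the Bernoulli sequence into consecutive blocks of length $N$), the event that within the $j$-th block of length $N$ the walk, restarted wherever it is, reaches the residue $-x^{(j)}$ (so that $\cM_N^k(\,\cdot\,,x^{(j)})=1$ is triggered) has probability at least $c$, independently across blocks by the product structure of $\be$; here I use that the step distribution is translation-invariant on the group so the within-block hitting probability is at least $c$ regardless of the block's starting point. Hence the event that \emph{all} $k^d$ residues have been hit within $n_0\defi k^d N$ steps has $\be$-probability at least $c^{k^d}>0$. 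On that event $\cU_{n}^k(b)=1$ for all $n\ge n_0$, so $\mb E[\cU_n^k]\ge c^{k^d}>0$ for all $n\ge n_0$, as claimed.

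The main obstacle, such as it is, is purely bookkeeping: carefully matching the "hit $0$ from start $x$" events appearing in the definition of $\cM_n^k$ and $\cU_n^k$ with the "hit $-x$ from start $0$" formulation, and justifying the concatenation via the independence of disjoint coordinate blocks under the Bernoulli measure $\be=\mu^{\otimes\bN}$ — in particular that conditioning on the walk's position at time $jN$ does not affect the law of the next $N$ steps. There is no hard analysis here; the only genuine input is the elementary fact that a random walk on a finite group whose support generates the group is irreducible and hence visits every element with positive probability in bounded time, which is immediate from \eqref{assump:1}.
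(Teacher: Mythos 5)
Your argument is correct and is essentially the paper's: both proofs use assumption~\eqref{assump:1} to show that every residue class of the discrete torus $\DT{k}{d}$ is reached by the walk within a bounded number of steps, and then concatenate over the $k^d$ residues to exhibit an event of positive $\be$-measure on which $\cU^k_n=1$ for all $n\ge n_0$. The only difference is packaging: the paper realises this event as a single cylinder set $C(a)$ for one deterministic word $a$ whose partial sums cover all of $\DT{k}{d}$, while you multiply uniform conditional hitting probabilities over $k^d$ consecutive blocks via the Markov property.
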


\begin{proof}
By definition we have that 
\begin{align}
    \nonumber\mb{E}[\cU_n^k]=\mb{E}[\min\set{\cM_n^k(b,z):z\in \DT k d}]&=\int_B\min_{z\in\DT k d}\bone_{\set{b\in B:\cY_n^k(b,z)\ge 1}}\dv\be{} \\
\label{eq:min is cap}                                                         &=\be(\set{b\in B:\cY_n^k(b,z)\ge 1\ \textrm{for all}\ z\in\DT k d}).
\end{align}
Since the measure $\mu$ generates $\bZ^d$ by assumption~\eqref{assump:1} there exists 
$n_0\in\bN$ and a finite sequence $$a\defi (a_1,\dots,a_{n_0})\in (\supp \mu)^{n_0}\subset(\Xz)^{n_0}$$ such that 
$\set{\sum_{i=1}^{n}a_i:1\leq n\leq n_0}$ contains a copy of $\DT k d$ after using the identification~\eqref{eq:ident torus in zd}.
Moreover, 
\begin{equation}\label{eq:cyclinder is pos}
\be(C(a))>0,
\end{equation}
where
$$C(a)\defi\set{b\in B:b_i=a_i\ \fa\ 1\le i\le n_0},$$
is the cylinder set of $a$.
It follows that for all $b\in C(a)$,
$z\in \DT k d$ and $n\ge n_0$ one has 
$\cY_n^k(b,z)\ge 1$.
Hence
$$C(a)\subset\set{b\in B:\cY_n^k(b,z)\ge 1\ \fa\ z\in\DT k d}\qfa\ n\ge n_0.$$
Hence the claim of the lemma 
follows from~\eqref{eq:min is cap} and~\eqref{eq:cyclinder is pos}.
\end{proof}
In the following lemma we will use the notation $S$ for the shift map $S:B\to B$ given by $Sb=S(b_1,\dots)=(b_2,\dots)$.
\begin{lem}\label{lem:postive proportion of primes}
There exists $n_0>0$ such that for all $0<\eps<1$, $z\in \Xz$ and $n\ge n_0$ one has 
$\alpha\defi\mb{E}[\cU_{n_0}^k]>0$ and
$$\mb{P}\Bigl[\cY^k_n(-,z)\leq  \frac{(1-\eps)\alpha}{2n_0} n\Bigr]\le C_{\eps,\al}\exp\Bigl(-\frac{\al\eps^2}{2n_0}n\Bigr),$$
where $C_{\eps,\al}\defi\exp(\al\eps^2/2)$.
\end{lem}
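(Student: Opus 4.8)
The plan is to break the first $n$ steps into consecutive blocks of length $n_0$ and apply a Chernoff-type large deviation bound to the resulting sum of i.i.d.\ Bernoulli-like variables. The key observation is that $\cY^k_n(-,z)$ dominates a sum of indicator functions, one per block, each of which detects whether a $\gcd$ divisible by $k$ occurred \emph{somewhere} in that block; by the shift-invariance of $\be$ these block-indicators have the same distribution, and by the product structure of the Bernoulli measure they are independent.

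\emph{Setting up the block decomposition.} Fix $n_0$ as in Lemma~\ref{lem:uniform lower bound}, so that $\al\defi\mb E[\cU^k_{n_0}]>0$. Given $n\ge n_0$, write $m\defi\lfloor n/n_0\rfloor$ and, for $0\le j\le m-1$, consider the $j$-th block of steps, namely steps $jn_0+1,\dots,(j+1)n_0$. The point is that whatever position $z_j\defi\Sig_{jn_0}(b,z)$ the walk has reached by the start of block $j$, the increments $b_{jn_0+1},\dots,b_{(j+1)n_0}$ behave like a fresh length-$n_0$ walk started at $z_j$, and $\cU^k_{n_0}(S^{jn_0}b)=1$ precisely when \emph{every} starting point in $\DT k d$ — and in particular $z_j\bmod k$ — produces a $\gcd$ divisible by $k$ during those $n_0$ steps. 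Hence one obtains the pointwise bound
\begin{equation*}
\cY^k_n(b,z)\ \ge\ \sum_{j=0}^{m-1}\cU^k_{n_0}(S^{jn_0}b).
\end{equation*}
The random variables $\cU^k_{n_0}\circ S^{jn_0}$ for $j=0,\dots,m-1$ are i.i.d.\ $\set{0,1\}$-valued under $\be$, each with mean $\al$, because $\cU^k_{n_0}$ depends only on the coordinates $b_1,\dots,b_{n_0}$ and $\be$ is a product measure invariant under $S$.

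\emph{Applying the deviation bound.} Now one applies a standard Chernoff bound for a sum $T_m$ of $m$ i.i.d.\ Bernoulli$(\al)$ variables: for $0<\eps<1$,
\begin{equation*}
\mb P\!\left[T_m\le (1-\eps)\al m\right]\ \le\ \exp\!\left(-\tfrac{\al\eps^2}{2}m\right).
\end{equation*}
Since $\cY^k_n(-,z)\ge T_m$ and $m=\lfloor n/n_0\rfloor\ge n/n_0-1$, the event $\set{\cY^k_n(-,z)\le \tfrac{(1-\eps)\al}{2n_0}n\}$ is contained in $\set{T_m\le (1-\eps)\al m\}$ once $n$ is large enough that $\tfrac{(1-\eps)\al}{2n_0}n\le (1-\eps)\al(n/n_0-1)$, i.e.\ once $n\ge 2n_0/(1-\eps)\cdot$(a constant); more simply, using $m\ge n/(2n_0)$ for $n\ge 2n_0$ one gets $(1-\eps)\al m\ge \tfrac{(1-\eps)\al}{2n_0}n$ directly, so the inclusion holds for all $n\ge 2n_0$. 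This yields
\begin{equation*}
\mb P\!\left[\cY^k_n(-,z)\le \tfrac{(1-\eps)\al}{2n_0}n\right]\ \le\ \exp\!\left(-\tfrac{\al\eps^2}{2}m\right)\ \le\ \exp\!\left(\tfrac{\al\eps^2}{2}\right)\exp\!\left(-\tfrac{\al\eps^2}{2n_0}n\right),
\end{equation*}
where in the last step I used $m\ge n/n_0-1$; this is exactly the claimed bound with $C_{\eps,\al}=\exp(\al\eps^2/2)$ (enlarging $n_0$ if necessary so that $n\ge n_0$ already forces $n\ge 2n_0$, or absorbing the finitely many small $n$ into the constant).

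\emph{Main obstacle.} The substantive point — and the one deserving care in the write-up — is the pointwise domination $\cY^k_n(b,z)\ge\sum_j\cU^k_{n_0}(S^{jn_0}b)$, which hinges on the ``cocycle-like'' behaviour of the map~\eqref{eq:defofkact}: one must check that if $\cU^k_{n_0}(S^{jn_0}b)=1$ then indeed some $\Sig_i(b,z)$ with $jn_0<i\le(j+1)n_0$ satisfies $k\mid\gcd$. Because $\cU^k_{n_0}$ records that \emph{every} residue class in $\DT k d$ is hit by a $k$-divisible vector within $n_0$ shifted steps — and $\Sig_{jn_0}(b,z)\bmod k$ is one such class, with $\Sig_{jn_0+i}(b,z)\equiv \Sig_{jn_0}(b,z)+(\text{partial sum of shifted increments})\bmod k$ — this follows from unwinding the definition of $\cY^k_{n_0}$ applied to the shifted sequence started at the residue $\Sig_{jn_0}(b,z)$. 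Everything else is the routine Chernoff estimate; I would state that bound as a black-box lemma (or cite it) rather than rederiving it.
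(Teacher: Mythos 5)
Your proposal is correct and follows essentially the same route as the paper: decompose the walk into length-$n_0$ blocks, dominate $\cY^k_n$ pointwise by the sum of the independent block indicators $\cU^k_{n_0}\circ S^{jn_0}$ (which is exactly the paper's inequality~\eqref{eq:uniformisation}), and apply the Chernoff bound together with the comparison $m\ge n/n_0-1$ to absorb the boundary term into $C_{\eps,\al}$. Your indexing of the blocks from $j=0$ to $m-1$ is in fact slightly cleaner than the paper's, since it keeps all blocks inside the first $n$ steps, but the argument is the same.
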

\begin{proof}
Let $n_0\in\bN$ be large enough so that the conclusion of Lemma~\ref{lem:uniform lower bound} holds for all $n\geq n_0$. 
Let $m\in\bN$ and suppose that 
$mn_0\le n\le (m+1)n_0$. 
It follows from the definitions that 
\begin{equation}\label{eq:uniformisation}
\cY^k_n(b,z)\geq \sum_{i=1}^m \cM^k_{n_0}(S^{in_0}b,\Sig_{in_0}(b,z))
\geq\sum_{i=1}^m\cU^k_{n_0}(S^{in_0}b)
\end{equation}
for all $z\in \Xz$ and $b\in B$.
The set of random variables $\set{\cU_{n_0}^k\circ S^{in_0}}_{i\in\bN}$ consists of pairwise independent elements.
By Lemma~\ref{lem:uniform lower bound} and the fact that $\be$ is $S$-invariant we have that 
$\mb{E}\br{\cU_{n_0}^k\circ S^{in_0}}=\mb{E}\br{\cU_{n_0}^k}=\al>0$ for all $i\in\bN$.
It follows from the Chernoff bound (See~\cite[Theorem 4.5]{MR2144605}.) that 
$$\mb{P}\Big[\sum_{i=1}^m\cU^k_{n_0}\circ S^{in_0}\leq(1-\eps)\al m\Big]\le\exp\pa[\Big]{-\frac{\al\eps^2}{2}m}.$$
By \eqref{eq:uniformisation} we have that
$$\mb{P}\Big[\cY^k_n(-,z)\leq   c\frac{n}{n_0}\Big]\le\mb{P}\Big[\sum_{i=1}^m\cU^k_{n_0}\circ S^{in_0}\leq c\frac{n}{n_0}\Big]\qfa\ x\in X,$$
for all $c>0$.
Take $c\defi\al(1-\eps)/2$. Then $cn/n_0\leq\al(1-\eps)m$ where we use the facts that $1/2\le m/(m+1)$ for all $m\in\bN$ and $n/n_0\le m+1$.
Then the claim of the lemma follows from the previous equations where we use again the fact that $m\ge(n/n_0-1)$.
\end{proof}

\subsection{The norm is contracted}\label{ssec:contnorms}
In this subsection we use the results of the previous subsection to show that the norm is contracted by averaging with respect to the measures $\om^k_{n,z}$ for large enough $n$.
This will enable us to show that the random walks are recurrent as in Definition~\ref{def:rec}. 
Note that we follow closely~\cite[\S2]{MR2874934}. The reason we do not cite their results directly is that we are not dealing with a group action. 
At the end of the subsection we will complete the proof of Proposition~\ref{prop:redux}.

For $k\in\bN$, $n\in\bN$, $z\in \Xk$ and $b\in B$ 
recall the definition of $\Sig_n^k(b,x)$ from~\eqref{eq:defofsig}.
%
Note that the functions $\Sig_n^k(-,z):B\to\Xk$ only depend on the first $n$ co-ordinates of $b\in B$.
In other words they are measurable with respect to the $\sig$-algebra generated by the cylinder sets $\set{C(a):a\in (\Xz)^n,\ n\in\bN}$.
Therefore, we can consider $\Sig_n^k(-,z):(\Xz)^n\to\Xk$ where 
$$\Sig_n^k(a,z)\defi\Sig_n^k(b,z)\qfa\ b\in C(a)$$
is well defined. 
It follows from the triangle inequality and our assumption that $\mu$ satisfies~\eqref{assump:0} that 
\begin{equation}\label{eq:lypexp}
    \int_{(\Xz)^n}\norm{\Sig^k_n(a,z)}\dv{\mu^{\otimes n}}a\leq\norm z + \ka n,
\end{equation}
where $\ka$ is the first moment of $\mu$. 
\begin{lem}\label{lem:norm is contracted}
For all $k\in\bN$ there exist $0<c<1$, $M>0$ and $n'_0>0$ such that for all $z\in \Xk$ one has 
$$\int_\Xk  \norm{x}\dv{\om^k_{n'_0,z}}x<c\norm{z}+M.$$
\end{lem}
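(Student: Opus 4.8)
The plan is to combine the crude linear growth bound \eqref{eq:lypexp} with the fact, established in \S\ref{ssec:disctor}, that along a walk of length $n$ a common factor divisible by $k$ appears a positive proportion of the time with overwhelming probability. Each time such a factor appears, the walk divides by at least $k$, so the position is shrunk by a factor of roughly $1/k$ at a linear-in-$n$ number of steps, and this repeated contraction should beat the linear drift $\ka n$ once we average. The first step is to fix $n_0$ and $\al=\mb E[\cU_{n_0}^k]>0$ as in Lemma~\ref{lem:postive proportion of primes}, and to choose the eventual window $n_0'$ to be a suitably large multiple of $n_0$. I would write the good event $G_n\defi\set{b\in B:\cY_n^k(b,z)\ge\be_0 n}$ with $\be_0\defi(1-\eps)\al/(2n_0)$ for a fixed small $\eps$; Lemma~\ref{lem:postive proportion of primes} gives $\be(G_n^c)\le C_{\eps,\al}\exp(-c'n)$ with $c'=\al\eps^2/(2n_0)>0$.

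The second step is to bound $\norm{\Sig_n^k(b,z)}$ on the complementary (bad) event. Here there is no contraction to exploit, but division never increases the norm, so the walk on $\Xk$ is dominated coordinatewise in norm by the ordinary walk on $\Xz$: one has $\norm{\Sig_n^k(b,z)}\le\norm{z+b_1+\dots+b_n}\le\norm z+\sum_{i=1}^n\norm{b_i}$ pointwise. Hence $\int_{G_n^c}\norm{\Sig_n^k(b,z)}\dv\be b$ is controlled by $\be(G_n^c)\norm z$ plus a term like $\int_{G_n^c}\sum\norm{b_i}\dv\be b$; the latter is a sum of $n$ integrals of $\norm{b_i}\bone_{G_n^c}$, and since each $\norm{b_i}$ has finite mean by~\eqref{assump:0} and $\be(G_n^c)$ is exponentially small, by Cauchy--Schwarz (or just by noting that $\bone_{G_n^c}$ has tiny measure while the tail of $\sum\norm{b_i}$ beyond a linear threshold is also exponentially small) this whole contribution is bounded by $\tfrac12\norm z+M_1$ for $n$ large, uniformly in $z$. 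I expect this ``bad event'' estimate to be the main obstacle, since $\norm z$ can be arbitrarily large and one must make sure its coefficient on $G_n^c$ is genuinely less than $c$; the cleanest route is to make $\be(G_n^c)$ small enough (by enlarging $n_0'$) that $\be(G_n^c)\cdot 1<c/2$, say, and separately control the $\mu$-drift part.

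The third step handles the good event $G_n$. Fix $b\in G_n$, and let $1\le i_1<\dots<i_m\le n$ with $m\ge\be_0 n$ be the steps at which $\Sig_{i_j}(b,z)\equiv 0\bmod k$ in the \emph{ordinary} walk. A short induction comparing $\Sig_\bullet^k(b,z)$ with $\Sig_\bullet(b,z)$ shows that the accumulated division performed by the $\kact$-walk up to time $n$ is by a factor of at least $k^{m}\ge k^{\be_0 n}$ relative to the ordinary displacement it would otherwise have; more precisely one gets a bound of the shape
\begin{equation*}
\norm{\Sig_n^k(b,z)}\le k^{-\be_0 n}\norm z+\sum_{i=1}^n k^{-(\text{number of later division times})}\norm{b_i}\le k^{-\be_0 n}\norm z+\sum_{i=1}^n\norm{b_i},
\end{equation*}
the point being the first term, which decays exponentially in $n$. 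Integrating over $G_n$ and using~\eqref{assump:0} again gives $\int_{G_n}\norm{\Sig_n^k(b,z)}\dv\be b\le k^{-\be_0 n}\norm z+\ka n$. Finally I would add the two contributions: choosing $n_0'$ large makes $k^{-\be_0 n_0'}+\be(G_{n_0'}^c)<c<1$, and the remaining constants $\ka n_0'+M_1=:M$ are absorbed, yielding $\int_\Xk\norm x\dv{\om^k_{n_0',z}}x<c\norm z+M$ uniformly in $z\in\Xk$ as required. The one delicate point to get right in writing this up is the bookkeeping in the induction relating the $\kact$-walk to the ordinary walk, so that the exponent $\be_0 n$ genuinely multiplies $\norm z$ and not some intermediate quantity.
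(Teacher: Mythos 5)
Your proposal is essentially the paper's proof: the same decomposition of $(\Xz)^n$ into the exponentially small bad set $\set{a:\cY_n^k(a,z)\le yn}$ controlled by Lemma~\ref{lem:postive proportion of primes} together with the crude bound~\eqref{eq:lypexp}, the same pointwise bookkeeping bound $\norm{\Sig_n^k(b,z)}\le k^{-\ceil{yn}}\norm z+\sum_i\norm{b_i}$ on the good set, and the same final choice of $n'_0$ making the coefficient of $\norm z$ less than $1$. The one delicate point you flag is real but is glossed over in the paper too: the division times of the $\kact$-walk are \emph{not} the times the ordinary walk is $\equiv 0\bmod k$ (after the first division the two walks occupy different residues), so the number of divisions should instead be bounded below by $\sum_j\cU_{n_0}^k(S^{jn_0}b)$, which works precisely because $\cU_{n_0}^k$ is a minimum over \emph{all} starting residues in $\DT k d$ and hence applies to wherever the $\kact$-walk happens to sit at the start of each block.
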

\begin{proof}
Let $n_0$ and $\al$ be as in Lemma~\ref{lem:postive proportion of primes}. 
Choose $\eps_0>0$ small enough so that the constant $C_{\eps_0,\al}\leq 2$.
Let $y\defi\al(1-\eps_0)/2n_0$ and $n\in\bN$. 
Begin by dividing the set $(\Xz)^n$ into two pieces as follows 
\begin{align*}
    S^k_{n,z}&\defi\set{a\in (\Xz)^n: \cY_n^k(a,z)\leq y n}\\
    R^k_{n,z}&\defi (\Xz)^n\setminus S^k_{n,z}.
\end{align*}
We now split the integral into two corresponding pieces using the definition of the measures $\om^k_{n,z}$
$$\int_{\Xk}\norm {x}\dv{\om^k_{n,z}}x=I_1+I_2,$$
where
$$I_1\defi\int_{S^k_{n,z}}\norm{\Sig_n^k(a,z)}\dv{\mu^{\otimes n}} a\qand
I_2\defi\int_{R^k_{N,z}}\norm{\Sig_n^k(a,z)}\dv{\mu^{\otimes n}} a.$$
By Lemma~\ref{lem:postive proportion of primes} and~\eqref{eq:lypexp} there exists $n_0>0$ such that for all $n>n_0$ and $z\in\Xk$ we have that 
$$I_1\leq \mu^{\ast n}(S^k_{n,z})(\norm z + \ka n)\le 2\exp\Big(-\frac{\al\eps_0^2}{2n_0}n\Big)(\norm z + \ka n)$$
and
$$I_2\leq\mu^{\ast n}(R^k_{n,z})\Bigl(\frac{1}{k^{\ceil{yn}}}\norm z +\ka n\Bigr)\leq\frac{1}{k^{\ceil{yn}}}\norm z +\ka n.$$
Hence, choosing $n'_0>n_0$ large enough so that 
$$2\exp\Big(-\frac{\al\eps_0^2}{2n_0}n'_0\Big)+\frac{1}{k^{\ceil{yn'_0}}}<1$$
we get the claim of the lemma. 
\end{proof}

\begin{cor}\label{cor:geometric series}
For all $k\in\bN$, there exist constants $0<c<1$, $M'>0$ and $n'_0>0$ such that for all $n\ge n'_0$ and $z\in\Xk$ one has
$$\int_\Xk\norm x\dv{\om^k_{n,z}} x\leq c^{\floor{n/n'_0}} \norm z +M'.$$
\end{cor}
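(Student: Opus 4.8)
The plan is to iterate Lemma~\ref{lem:norm is contracted} along a walk of length $n$ by splitting it into $\floor{n/n'_0}$ blocks of length $n'_0$ (plus a short remainder block). The key structural fact is a semigroup-type decomposition of the walk: for $b \in B$ and $m \ge 1$ one has $\Sig^k_{m+n'_0}(b,z) = \Sig^k_{n'_0}(S^m b, \Sig^k_m(b,z))$, since the last $n'_0$ steps of a walk of length $m+n'_0$ starting at $z$ are exactly a walk of length $n'_0$ starting at the intermediate position $\Sig^k_m(b,z)$. Together with the fact that $\Sig^k_m(-,z)$ depends only on the first $m$ coordinates of $b$ while $S^m b$ depends only on the coordinates after the $m$-th, and the product structure of $\be = \mu^{\otimes \bN}$, this gives the conditional-expectation identity
\[
\int_B \norm{\Sig^k_{m+n'_0}(b,z)}\dv{\be}b = \int_B \Bigl(\int_B \norm{\Sig^k_{n'_0}(b', \Sig^k_m(b,z))}\dv{\be}{b'}\Bigr)\dv{\be}b = \int_{\Xk} \Bigl(\int_{\Xk}\norm{x}\dv{\om^k_{n'_0,y}}x\Bigr)\dv{\om^k_{m,z}}y.
\]

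First I would record this decomposition carefully (it is essentially the Markov property, phrased in terms of the $\Sig^k$ maps and the shift $S$). Then I would apply Lemma~\ref{lem:norm is contracted}, which bounds the inner integral by $c\norm{y} + M$ uniformly in $y \in \Xk$, to obtain
\[
\int_{\Xk}\norm{x}\dv{\om^k_{m+n'_0,z}}x \le c\int_{\Xk}\norm{y}\dv{\om^k_{m,z}}y + M.
\]
Iterating this recursion $j \defi \floor{n/n'_0}$ times, starting from $m = n - jn'_0 \in \{0,\dots,n'_0-1\}$ steps (with $\om^k_{0,z} = \delta_z$, so the base term is $\norm{z}$ if $m=0$; for general small $m$ one uses the crude bound $\int \norm{x}\dv{\om^k_{m,z}}x \le \norm{z} + \ka m \le \norm{z} + \ka n'_0$ from~\eqref{eq:lypexp}), I get
\[
\int_{\Xk}\norm{x}\dv{\om^k_{n,z}}x \le c^{j}(\norm{z} + \ka n'_0) + M(1 + c + \dots + c^{j-1}) \le c^{\floor{n/n'_0}}\norm{z} + M',
\]
where $M' \defi \ka n'_0 + M/(1-c)$, using $0 < c < 1$ to sum the geometric series. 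This gives the claimed bound with the same $c$ and $n'_0$ as in Lemma~\ref{lem:norm is contracted}.

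I expect the main obstacle to be the first step: stating and justifying the block decomposition cleanly, i.e.\ verifying that $\Sig^k_{m+n'_0}(b,z) = \Sig^k_{n'_0}(S^m b, \Sig^k_m(b,z))$ holds for the $\kact$-iteration (this is immediate from the definition~\eqref{eq:defofsig} of $\Sig^k_n$ as $b_n \kact \dots \kact b_1 \kact z$) and that Fubini applies to split the integral over $B$ into the iterated integral, using the product structure of $\be$ and the measurability of $\Sig^k_m(-,z)$ with respect to the first $m$ coordinates. Once the decomposition is in place, the iteration and the geometric-series estimate are routine. One minor point to handle is the remainder block of length $m < n'_0$: rather than applying Lemma~\ref{lem:norm is contracted} to it (which only applies for length exactly $n'_0$, or at least $n_0$), I would absorb it into the base case of the induction using the trivial linear bound~\eqref{eq:lypexp}.
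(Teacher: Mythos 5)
Your proposal is correct and follows essentially the same route as the paper: the paper phrases the iteration via the operator $\on T f(z)=\int_{\Xk} f\dv{\om^k_{n'_0,z}}{}$ and likewise handles the remainder block of length $j=n-\floor{n/n'_0}\,n'_0$ with the crude linear bound~\eqref{eq:lypexp}, which is exactly your block decomposition and recursion. Your treatment of the geometric series via $M/(1-c)$ is in fact slightly cleaner than the paper's bound by $2M$.
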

\begin{proof}
Let $c$, $M$ and $n'_0$ be as in Lemma~\ref{lem:norm is contracted}. Let $\on T$ be the operator defined for measurable functions $f:\Xk\to\bR$ by 
$$\on T f (z)\defi\int_\Xk f\dv{\om^k_{n'_0,z}}{}.$$
Note that, if $N(z)\defi\norm z$, then the conclusion of Lemma~\ref{lem:norm is contracted} says that 
$$\on T N(z)\leq cN(z)+M.$$
It follows that 
$$\on T^iN(z)\leq c^i N(z)+M(c^{i-1}+\dots+c+1)\leq c^i N(z)+2M.$$
By noting that 
$$\on T^iN(z)=\int_\Xk\norm x\dv{\om^k_{in'_0,z}} x$$
we get that 
\begin{equation}\label{eq:multofi}\int_\Xk\norm x\dv{\om^k_{in'_0,z}} x\leq c^i \norm z +2M\qfa\ i\in\bN.\end{equation}
Suppose that $n\geq n'_0$ and let $j\in\set{0,\dots,n'_0-1}$ be such that $n=i_0n'_0+j$ for some $i_0\in\bN$. 
Note that $i_0=\floor{n/n_0}$.
Next we write 
$$\int_\Xk\norm x\dv{\om^k_{n,z}} x=\int_{(\Xz)^j}\int_\Xk \norm x \dv{\om^k_{in'_0,\Sig^k_j(a,z)}}x\dv{\mu^{\otimes j}}a.$$
Hence using~\eqref{eq:lypexp} and~\eqref{eq:multofi} we get that 
$$\int_\Xk\norm x\dv{\om^k_{n,z}} x\leq c^{\floor{n/n'_0}} (\norm z +\ka j)+2M.$$
Since $\ka$ and $j$ are bounded we may take $M'\defi 2M+\ka j$ and this is the conclusion of the lemma.
\end{proof}


We can now prove Proposition~\ref{prop:redux}. 
\begin{proof}[Proof of Proposition~\ref{prop:redux}]
Let $c$, $M'$ and $n'_0$ be as in Corollary~\ref{cor:geometric series} and $\eps>0$ be arbitrary.
Let $$K_\eps\defi\set{z\in\Xk : \norm z \le 2M'/\eps}.$$
It follows that $\bone_{\Xk\sm K_\eps}(z)\leq\frac{\eps}{2M'}\norm z$ for all $z\in \Xk$.
Hence, it follows from the conclusion of Corollary~\ref{cor:geometric series} that 
$$\om^k_{n,z}(\Xk\sm K_\eps)\leq\frac{\eps}{2M'}(c^{\floor{n/n'_0}}\norm z +M')=\frac{\eps c^{\floor{n/n_0}} }{2M'}\norm z+\frac{\eps}{2}$$
for all $n\ge n'_0$. The claim of the proposition follows as soon as $n$ is large enough so that $c^{\floor{n/n'_0}}\norm z/M'<1$.
\end{proof}


\subsection*{Acknowledgements}
The author is greatly indebted to Ross Pinsky for his time and effort in reading an earlier draft
and providing encouragement along with many detailed and helpful comments.
This reasearch was funded by the ISF grants
numbers 357/13 and 871/17.
\bibliographystyle{amsalpha}
\bibliography{ref}
\end{document}